\newtheorem{defini}{Definition}
\newtheorem{prop}{Proposition}
\renewcommand{\phi}{\varphi}
\title{Effective Reducibility for Statements of Arbitrary Quantifier Complexity with Ordinal Turing Machines}
\author{Merlin Carl}
\date{August 2025}
\institute{Institut f\"ur Mathematik, Europa-Universit\"at Flensburg}
\begin{document}

\maketitle





\begin{abstract}
This paper is an extended version of our work in \cite{Ca2025}. 
We extend the concept of effective reducibility between statements of set theory with ordinal Turing machines (OTMs) explored in \cite{Ca2018} for $\Pi_{2}$-statements to statements in prenex normal form of arbitrary quantifier complexity and use this to compare various fundamental set-theoretical principles, including the power set axiom, the separation scheme, the collection and replacement schemes and several principles related to the notion of cardinality, with respect to effective reducibility. This notion of reducibility is both different from (i.e., strictly weaker than) classical truth and from the OTM-realizability of the corresponding implications. Along the way, we obtain a computational characterization of HOD as the class of sets that are OTM-computable relative to every effectivizer of $\Sigma_{2}$-separation. We also consider an associated variant or Weihrauch reducibility. 
\end{abstract}

\section{Introduction}

A $\Pi_{2}$-statement $\forall{x}\exists{y}\phi(x,y)$ (with $\phi$ a $\Delta_{0}$-formula) can be regarded as \textit{effective} if there is an effective method for constructing a corresponding $y$ for each given $x$.  
In effective reducibility, one then asks whether, given access to any method effectivizing a statement $\psi$, one can effectivize $\phi$ relative to that method. Usually, ``effective method'' is interpreted as ``Turing program'' (see, e.g., Hirschfeldt, \cite{Hirschfeldt}, pp. 22f). 

In \cite{Ca2016}, we introduced a variant of effective reducibility based on Koepke's ordinal Turing machines (OTMs) rather than classical Turing machines; in \cite{Ca2018}, we conducted an investigation concerning the OTM-reducibility of various formulations of the axiom of choice. However, as sketched in section $6$ of \cite{Ca2018}, the ideas behind effectivity and reducibility can be extended to arbitrary set-theoretic statements, provided they are given in prenex normal form. We also briefly mentioned that, in this setting, the separation scheme was reducible to the replacement scheme. 

In \cite{Ca2025}, we began a study of this extension of effective OTM-reducibility to statements of arbitrary quantifier complexity. The present work is an expanded version of  \cite{Ca2025} with several new results. In particular, the principles Card, DecCard, OrdCard and PowerCard (to be defined below) and all results concerning these principles are original contributions of this paper, as is Theorem \ref{pot sep hod}, which yields a complete characterization of models of set theory in which the power set axiom is reducible to $\Sigma_{2}$-separation. Most of the other results are from \cite{Ca2025}.

Another way to ``effectivize'' set-theoretical statements with OTMs is to consider notions of \textit{realizability} based on OTMs; this path was pursued in \cite{CGP}. 
These approaches agree in the sense that, for formulas in prenex normal form, OTM-effectivity is equivalent to uniform OTM-realizability (\cite{CGP}, Definition $18$) with a single ordinal parameter. 
However, the OTM-effective reducibility of $\phi$ to $\psi$ differs considerably from the OTM-realizability of $\psi\rightarrow\phi$: While an OTM-realizer must be able to convert OTM-\textit{realizers} for $\psi$ into OTM-realizers for $\phi$, an OTM-effective \textit{reduction} must achieve the analogue for \textit{arbitary} effectivizers, which, from the point of view of OTMs, may be entirely  ineffective. As we will see below, OTM-effective reducibility is indeed strictly stronger than the OTM-realizability of the corresponding implication, for formulas in prenex normal form.

In particular, we will investigate interdependencies between the axioms of ZFC in terms of effective reducibility, showing that an effectivizer for the separation scheme makes all heriditarily ordinally definable sets computable (Theorem \ref{full sep and hod}), that the power set axiom is effectively reducible to the $\Sigma_{3}$-replacement scheme (Theorem \ref{pot}(4)) and that replacement is reducible to the conjunction of the separation scheme and the power set axiom (Lemma \ref{sep and rep}(1)), while a number of other reductions, such as the reducibility of $\Sigma_{3}$-replacement to separation alone (Lemma \ref{sep and rep}(4)), or the the reducibility of the power set axiom to the statement that each cardinal has a successor (Theorem \ref{pot}(3)), are independent of ZFC. In such cases, one can ask whether the reducibility statement is equivalent to some known axiom extending ZFC; an example of this is Theorem \ref{pot sep hod}.

\section{Basic Definition}

Our notation is mostly standard. We use $\mathfrak{P}(x)$ for the power set of $x$, $\text{Card}$ for the class of cardinals, $\text{card}^{M}(x)$ for the cardinality of $x$ in the model $M$ (where the superscript $M$ is dropped when the universe is clear from the context), $\text{tc}(x)$ for the transitive closure of $x$ and $\kappa^{+}$ for the next cardinal after $\kappa$, where $\kappa$ is an ordinal. For ordinals $x$ and $y$, $p(x,y)$ is Cantor's ordinal pairing function; if $z$ is a code for a pair, we write $(z)_{0}$ and $(z)_{1}$ for its components. Arbitrary sets are encoded as sets of ordinals in the standard way (cf., e.g., \cite{CarlBuch}, Definition 2.3.18): For a set $x$, fix a bijection $f:\alpha\rightarrow\text{tc}(\{x\})$ (where $\alpha\in\text{On}$) such that $f(0)=x$, then code $x$ as $\{p(\iota,\xi):f(\iota)\in f(\xi)\}$. For an (OTM-)program $P$, we write $P^{F}(x,\rho)\downarrow=y$ to indicate that the program $P$, using the extra class function $F$ (see below) in the input $x$ and the parameter $\rho$, will halt with output $y$. Occasionally, the $\downarrow$ will be dropped.

We assume familiarity with Koepke's ordinal Turing machines (OTMs) and refer to \cite{Koepke} and \cite{CarlBuch} for details. In this paper, OTM-computations always allow for ordinal parameters, i.e., at the start of the computation, some ordinal on the scratch tape may be marked. (Many of our results, however, do not depend on the presence of parameters.) We will refer to a pair $(P,\rho)$ consisting of an OTM-program $P$ and an ordinal parameter $\rho$ as a \textit{parameter-program}. 
 The following definition is then a variant of a proposal in \cite{Ca2018}, section $6$:

\begin{defini}{\label{effectivizer}}
Let $\phi:\Leftrightarrow \forall{x_{0}}\exists{y_{0}}...\forall{x_{n}}\exists{y_{n}}\psi(x_{0},y_{0},...,x_{n},y_{n})$ be an $\in$-formula in prenex normal form, where $\psi$ is quantifier-free. An \textit{effectivizer} of $\phi$ is a class function $F:V\rightarrow V$ such that, for all $x_{0},...,x_{n}$, we have\footnote{Note the double brackets in $F((x_{0}))$, which indicate that $F$ is applied to the sequence consisting of the single element $x_{0}$, rather than to $x_{0}$ itself.} $$\psi(x_{0},F((x_{0})),x_{1},F((x_{0},x_{1})),...,x_{n},F((x_{0},...,x_{n}))).$$ 


An \textit{encoding} $\hat{F}$ of $F$ is a class function $F:\mathfrak{P}(\text{On})\rightarrow\mathfrak{P}(\text{On})$ such that, for all sets $x$, whenever $c_{x}\subseteq\text{On}$ codes $x$, then $\hat{F}(c_{x})$ codes $F(x)$. 

If $\hat{F}$ is OTM-computable by a parameter-program $(P,\rho)$, we call $(P,\rho)$ an \textit{OTM-effectivizer} of $\phi$ and say that $\phi$ is \textit{OTM-effective}. 

Thus, a statement is effective if some effectivizer of it has an OTM-computable encoding. 

This definition extends in a straightforward way to \textit{schemes} of statements: If $\Phi:=\{\phi_{i}:i\in\omega\}$ is a set of $\in$-statements,\footnote{Generalizations to uncountable schemes are easily possible, but will not be considered in this work.} then an effectivizer for $\Phi$ is a class function $F:\omega\times V\rightarrow V$ such that, for all $i\in\omega$, the function $x\mapsto F(i,x)$ is an effectivizer for $\phi_{i}$. As usual, we will write $\lceil\psi\rceil$ for that element $i\in\omega$ such that $\psi=\phi_{i}$. If $\phi$ is a formula and $p$ is a set parameter, then $\lceil\phi(p)\rceil$ abbreviates $(\lceil\phi\rceil,p)$.

\end{defini}


We will also consider cases where only the first few, rather than all, existential quantifiers in a statement are witnessed. For example, a statement such as ``Every ordinal has a cardinality'' has the form $\forall{\alpha}\exists{\beta}\exists{f}\forall{\gamma<\beta}\forall{g}(f:\beta\xrightarrow[]{\text{1:1}}\alpha\wedge\neg g:\gamma\xrightarrow[\text{}]{\text{1:1}}\beta)$. Clearly, finding the ``right'' ordinal is quite different from also offering a corresponding bijection. 
For this purpose, however, the above needs to be refined slightly. While it is always possible to contract several existential quantifiers into one by quantifying over a single tuple, distinguishing between different quantifiers in the same block makes a difference at this point.  Thus, we define:

\begin{defini}
    For a statement $\phi$ of the form $$\forall{x_{1}}\exists{y_{11},...y_{1k_{1}}}...\forall{x_{n}}\exists{y_{n1},...,y_{nk_{n}}}\psi(x_{1},y_{11},...,y_{1k_{1}},...,x_{n},y_{n1},...,y_{nk_{n}})$$ 
    and $k\leq k_{1}+...+k_{n}$, we say that $F:V\rightarrow V$ is a $k$-effectivizer for $\phi$ if and only if the following holds: For all $j\leq k$, $F$ satisfies the definition of an effectivizer for the first $k$ existentially quantified variables in $\phi$. 

    The notions of encoded $k$-effectivizer, OTM-$k$-effectivizer, OTM-$k$-effectivity and the extension to schemes work by the obvious analogy to the last statement.

\end{defini}

Thus, intuitively, a $k$-effectivizer of a statement $\phi$ in prenex normal form finds witnesses for the first $k$ existential quantifiers in $\phi$. 
Note that, as a consequence of this, the concept of a $k$-effectivizer of $\phi$ depends on the precise way $\phi$ is stated, and in particular on the ordering of the existential quantifiers in a block of existential quantifiers. In order to avoid confusion, we will below state explicitly what a $k$-effectivizer of a certain statement is intended to achieve in cases where this is not immediately clear.

\begin{remark}
    Below, we will slightly abuse our terminology by using the term ``effectivizers'' to also refer to encodings of effectivizers.
\end{remark}

\begin{remark}
    The same definitions work for \textit{game formulas}, 
    i.e., infinite formulas of the form $\forall{x_{0}}\exists{y_{0}}\forall{x_{1}}\exists{y_{1}}...\psi(x_{0},y_{0},...)$ in which an infinitary quantifier-free formula $\psi$ is preceded by a series of quantifiers of length $\omega$. However, such formulas will not be considered in this paper.
\end{remark}

\begin{defini}{\label{effectivizer sum}}
    If $\phi$ and $\psi$ are $\in$-statements in prenex normal form, then we refer to an ordered pair $(F,G)$, consisting of an effectivizer $F$ of $\phi$ and an effectivizer $G$ of $\psi$, as an effectivizer of $\phi\oplus\psi$. 
\end{defini}

\begin{remark}
    This notation is a mere matter of convenience. Effectivizers for $\phi\oplus\psi$ are easily (and, in particular, effectively) converted into effectivizers for the prenex normal form of $\phi\wedge\psi$ and vice versa. Note, in particular, that this does \textit{not} define a \textit{formula} ``$\phi\oplus\psi$''.
\end{remark}

We recall how OTM-programs can be modified to allow for calls to (class) functions $F$ mapping sets of ordinals to sets of ordinals as explained in \cite{Ca2016}, Definition $4$: The Turing commands are supplemented by a new $F$ command, and the machine works with an additional tape $T$ (which, in \cite{Ca2016}, was dubbed ``miracle tape''). The machine can operate on $T$ as it does on any other tape; when the $F$-command is carried out, the set of ordinals $x$ coded by the current content of $T$ is replaced by $F(x)$.   To indicate that the $F$-OTM-program $P$ is run in the class function $G$ (mapping sets of ordinals to sets of ordinals) in the input $x$, we write $P^{G}(x)$. In this way, OTM-programs can work relative to effectivizers. 
In order to simplify our exposition, we refer to these ``$F$-OTM-programs'' as OTM-programs in this paper from now on.

The following now generalizes \cite{Ca2018}, Definition $4$ and Definition $5$ (which, in turn, is a variant of classical effective reducibility and Weihrauch reducibility for OTM-computability) to formulas of arbitrary quantifier complexity:

\begin{defini}{\label{reducibility}}
Let $\phi$, $\psi$ be $\in$-formulas in prenex normal form. We say that $\phi$ is \textit{OTM-reducible} to $\psi$, written $\phi\leq_{\text{OTM}}\psi$ if and only if there are an OTM-program $P$ and some $\rho\in\text{On}$ such that, for each encoded effectivizer $F_{\psi}$ of $\psi$, $P^{F_{\psi}}(\rho)$ computes an encoded effectivizer for $\phi$. If $P$ computes a $k$-effectivizer for $\phi$ from every $l$-effectivizer for $\psi$ for some $k,l\in\omega$, we write $\phi\leq_{\text{OTM}}^{k,l}\psi$.

We say that $\phi$ is \textit{ordinal Weihrauch reducible} to $\psi$, written $\phi\leq_{\text{oW}}\psi$, if and only if there are such $P$ and $\rho$ that $P^{\hat{F}}$ computes an effectivizer for $F_{\psi}$, but, for each input, only calls $\hat{F}$ (at most) once. Equivalently, there are OTM-programs $P$ and $Q$ such that, for any code $c_{x}$ for a set $x$ and any $\hat{F}$ as above, $Q(c_{x})$ outputs some $c_{z}$ such that $P$, when run in the input $\hat{F}(c_{z})$, outputs some $c_{y}$ coding a set (or a finite sequence of sets) $y$ with the required property. If this computes $k$-effectivizers for $\phi$ from $l$-effectivizers for $\psi$, we speak of ordinal $(k,l)$-Weihrauch reducibility, written $\phi\leq_{\text{oW}}^{k,l}\psi$.

These definitions also extends to effectivizers for schemes in the obvious way.
\end{defini}

\begin{remark}
In a ZFC framework, class functions must be definable; for the purpose of this paper, this is an unwanted restriction of possible effectivizers (this concerns in particular those results concerning the axiom of choice). In particular, reductions may become trivial when definable effectivizers do not exist or when they do exist, but, due to the lack of a global well-ordering, there are still no encoded effectivizers. The way to formalize reducibility in ZFC is then the following: We add a new unary function symbol $F$ to the language of ZFC and formulate the ZFC-axioms for this extended language (which modifies only separation and replacement); call the resulting theory $\text{ZFC}(F)$. We then add an axiom $\mathcal{A}$ (or, the case of a scheme, a scheme) expressing that $F$ is an effectivizer for the statement $\psi$. We say that $\phi$ is ZFC-provably OTM-reducible to $\psi$ if and only if, for some OTM-program $P$, $\text{ZFC}(F)+\mathcal{A}$ proves that, for each encoding $\hat{F}$ of $F$, $P^{\hat{F}}$ computes an effectivizer for $\phi$. Most of our results and arguments work equally well for both definitions; it is only in our treatment of AC below that we need to  switch to ZFC-provable OTM-reducibility.

\end{remark}

We note some immmediate properties of these notions.

\begin{prop}
    \begin{enumerate}
        \item For all $i,j,k\in\omega$, if $j\leq k$, then $\leq_{\text{OTM}}^{(i,j)}\subseteq \leq_{\text{OTM}}^{(i,k)}$.
        \item For all $i,j,k\in\omega$, if $j\leq k$, then $\leq_{\text{OTM}}^{(j,i)}\supseteq \leq_{\text{OTM}}^{(k,i)}$.
        \item $\leq_{\text{OTM}}^{(i,i)}$ is transitive, for all $i\in\omega$. 
        \item If $\phi$ and $\psi$ are $\Pi_{3}$, then $\phi\leq_{\text{OTM}}\psi$ is equivalent to $\phi\leq_{\text{OTM}}^{1,1}\psi$. 
    \end{enumerate}
\end{prop}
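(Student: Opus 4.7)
The plan is to derive parts (1)--(3) from the monotonicity fact that every $k$-effectivizer is automatically also a $j$-effectivizer for every $j\leq k$ (the $j$-effectivizer requirement is the weaker one, since fewer Skolem witnesses are demanded), and to derive (4) from the observation that $\Pi_3$-formulas carry essentially a single existential to witness.

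For part (1), if $(P,\rho)$ witnesses $\phi \leq_{\text{OTM}}^{(i,j)} \psi$ and $j\leq k$, then any encoded $k$-effectivizer $\hat{F}_\psi$ of $\psi$ is a fortiori an encoded $j$-effectivizer of $\psi$: the witnesses it supplies for the first $k$ existentials in particular include (correct) witnesses for the first $j$. Feeding $\hat{F}_\psi$ to $P$ still yields an $i$-effectivizer of $\phi$, witnessing $\phi \leq_{\text{OTM}}^{(i,k)} \psi$. Part (2) is symmetric: if $(P,\rho)$ witnesses $\phi \leq_{\text{OTM}}^{(k,i)} \psi$ and $j\leq k$, then the $k$-effectivizer of $\phi$ produced by $P$ is at the same time a $j$-effectivizer of $\phi$, so the same $(P,\rho)$ witnesses $\phi \leq_{\text{OTM}}^{(j,i)} \psi$. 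In both cases the same OTM-program and parameter work; no new code needs to be written.

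For part (3), given $(P,\rho)$ witnessing $\phi \leq_{\text{OTM}}^{(i,i)} \psi$ and $(Q,\sigma)$ witnessing $\psi \leq_{\text{OTM}}^{(i,i)} \chi$, I would build a composite program $R$ with parameter $p(\rho,\sigma)$ that simulates $P^{\hat{F}_\psi}(\cdot,\rho)$ step by step but intercepts every oracle call of $P$ on an input $c$ and answers it by invoking $Q^{\hat{F}_\chi}(c,\sigma)$ as a subroutine and copying the result to the miracle tape of the outer simulation. By the two hypotheses, $R^{\hat{F}_\chi}(\cdot,p(\rho,\sigma))$ then computes an $i$-effectivizer of $\phi$ from any $i$-effectivizer of $\chi$. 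The main (routine) thing to check is that the oracle interception is correctly implemented on the OTM model, which is standard since the miracle tape is an ordinary tape that $Q$ can share with the simulation of $P$.

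For part (4), a $\Pi_3$-formula $\forall x \exists y \forall z\,\theta(x,y,z)$ with $\theta$ quantifier-free contains exactly one existential. Padding, if necessary, with a trivial block $\exists w\,(w=w)$ in order to fit the template of Definition~\ref{effectivizer}, the second existential is vacuously witnessed by any set, e.g.\ $\emptyset$. Hence a full effectivizer and a $1$-effectivizer of a $\Pi_3$-formula carry exactly the same non-trivial information and are interconvertible by trivial OTM-programs that append, respectively strip, a constant $\emptyset$-output for the dummy slot. Pre- and post-composing a given reduction with these trivial transformations on the two sides yields the claimed equivalence $\phi \leq_{\text{OTM}} \psi \Leftrightarrow \phi \leq_{\text{OTM}}^{1,1} \psi$, with no real obstacle beyond recognising that the higher-indexed existentials of $\Pi_3$ formulas are absent or trivial and hence do not contribute to the separation between the two reducibility notions observed for general formulas.
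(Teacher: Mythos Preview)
Your proof is correct and follows the same approach as the paper, which simply declares (1)--(3) ``trivial'' and for (4) notes in one line that a $\Pi_3$-formula has at most one existential quantifier requiring a witness. You have merely unpacked the routine details the paper leaves implicit; in particular, your composition argument for (3) and the dummy-existential padding for (4) are exactly what the paper's terse proof presupposes.
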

\begin{proof}
    (1)-(3) are trivial. For (4), note that a $\Pi_{3}$-formula possesses (at most) one existential quantifier that requires a witness.
\end{proof}

\section{Axioms of ZFC and other basic statements}

We will now consider the axiom (schemes) of ZFC, along with some basic consequences of ZFC, with respect to their OTM-effectivity and mutual OTM-reducibility.

\begin{defini}{\label{principles}}
We will use the following abbreviations for various set-theoretical statements:
\begin{itemize}
    \item $\bot$ denotes $0=1$, while $\top$ denotes $0=0$.
    \item HP: For each OTM-program $P$ and each ordinal parameter $\rho$, there is $i\in\{0,1\}$ such that either $P(\rho)\downarrow$ and $i=1$ or $P(\rho)\uparrow$ and $i=0$. 
    \item GreaterCard: For each $\alpha\in\text{On}$, there is a cardinal $\kappa>\alpha$. 
    \item NextCard: For each $\alpha\in\text{On}$, there is a next largest cardinal $\kappa>\alpha$; we will denote this cardinal as $\alpha^{+}$.
    \item Card: Every set has a cardinality. By a $1$-effectivizer for Card, we mean one that, for a given set $s$, returns its cardinality. On the other hand, a full effectivizer for Card will return a pair $(\kappa,f)$ consisting of the cardinality $\kappa$ and a bijection $f$ between $\kappa$ and $s$.
    \item OrdCard: Every ordinal has a cardinality. We use the analogous convention to Card.
    \item PowerCard: For each $\alpha\in\text{On}$, the cardinality of its power set exists (that is, there are a minimal ordinal $\kappa$ such that there is a bijection between $\kappa$ and $\mathfrak{P}(\alpha)$). We use the analogous convention to Card.
    \item DecCard: Every ordinal is or is not a cardinal.\footnote{This requires a careful formalization in order to make computational sense. The formalization we chose is the following: For every ordinal $\alpha$, there $i\in\{0,1\}$ such that either $i=0$ and $\alpha$ is not a cardinal, or $i=1$ and $\alpha$ is a cardinal.}
    \item Pot: $\forall{x}\exists{y}\forall{z}(z\in y\leftrightarrow z\subseteq x)$
    \item Sep$_{\phi}$: $\forall{X,p}\exists{Y}\forall{z}(z\in Y\leftrightarrow z\in X\wedge\phi(z,p))$, for all formulas $\phi$. By $\Sigma_{n}$-Sep, we denote the set of all formulas Sep$_{\phi}$ where $\phi$ is $\Sigma_{n}$, for $n\in\omega$. 
    The totality of all these formulas will be denoted by Sep.
    \item Rep$_{\phi}$: $\forall{X,p}(\forall{x\in X}\exists!{y}\phi(x,y,p)\rightarrow\exists{Y}\forall{x\in X}\exists{y\in Y}\phi(x,y,p))$; by $\Sigma_{n}$-Rep, we denote the set of all formulas Sep$_{\phi}$, where $\phi$ is $\Sigma_{n}$, for $n\in\omega$.
    The totality of all these formulas will be denoted by Rep.
    \item Coll$_{\phi}$: $\forall{X,p}(\forall{x\in X}\exists{y}\phi(x,y,p)\rightarrow\exists{Y}\forall{x\in X}\exists{y\in Y}\phi(x,y,p))$; by $\Sigma_{n}$-Coll, we denote the set of all formulas Coll$_{\phi}$, where $\phi$ is $\Sigma_{n}$, for $n\in\omega$; the totality of all these formulas will be denoted by Coll.
    \item AC: $\forall{X}(\emptyset\notin X\rightarrow\exists{f:X\mapsto\bigcup{X}}\forall{x\in X}f(x)\in x)$.
    \item For $n\in\omega$, let $T_{n}(k,p)$ be a truth-predicate for $\Sigma_{n}$-formulas in the language of set theory. That is, for a natural enumeration $(\phi_{i}:i\in\omega)$ of the $\Sigma_{n}$-formulas, a tuple $p$ of sets, $T_{n}(k,p)$ is true if and only if $\phi_{k}(p)$.\footnote{To simplify the notation, a finite sequence of parameters is encapsulated in a single parameter $\rho$, if $\phi$ has several free variables.} This can be expressed by an $\in$-formula $\exists{x_{0}}\forall{y_{0}}...\exists{x_{n}}\forall{y_{n}}\psi_{n}^{T}(x_{0},y_{0},...,x_{n},y_{n})$ with $\psi^{T}$ quantifier-free; likewise, $\neg T_{n}$ can be expressed as $\forall{v_{0}}\exists{w_{0}}...\exists{v_{n}}\forall{w_{n}}\neg\psi_{n}^{T}(x_{v},w_{0},...,v_{n},w_{n})$, where $\{v_{0},w_{0},...,v_{n},w_{n}\}$ is disjoint from $\{x_{0},y_{0},...,x_{n},y_{n}\}$. Let $Q$, $Q_{\neg}$ be the quantifier sequence starting these formulas. By $\Sigma_{n}$-TND,\footnote{``tertium non datur''. } we denote the of the statement $$\forall{k}\forall{p}\exists{i\in\{0,1\}}QQ_{\neg}((i=1 \wedge \psi_{n}^{T}(k,p))\vee (i=0\wedge\neg \psi_{n}^{T}(k,p))).$$
    
\end{itemize}
\end{defini}

\begin{remark}
This list contains several axiom (schemes) of ZFC. It is easy to see that all ZFC-axioms not mentioned in this list are OTM-effective and can thus be disregarded in considerations about reducibility.
\end{remark}

It is not clear that computations relative to effectivizers of full Rep and Sep exist in general, since the straightforward way of defining them would involve a truth predicate for the universe. We can, however, work around this by working in Feferman set theory,\footnote{See \cite{Feferman}, p. 202.} which is known to be equiconsistent with ZFC (ibid.) and extends ZFC by the statement that, for a proper class $C$ of cardinals $\kappa$, $V_{\kappa}$ is an elementary submodel of $V$. Then, the partial computation of length $\alpha$ of an OTM-program $P$ relative to, say, a separation operator, can be seen to exist by picking $\kappa\in C\setminus(\alpha+1)$, defining the computation with quantifiers restricted to $V_{\kappa}$, and using the fact that $V_{\kappa}\prec V$. We thus assume, in all results concerning Sep and Rep in this paper, Feferman set theory, which will be indicated by writing (Fef) at the beginning of the respective statement.

\begin{remark}
Clearly, HP is not OTM-effective, as an OTM-effectivizer would amount to a halting problem solver. It immediately follows that not even all logical tautologies are OTM-effective, so that every axiomatic system will, in classical logic, prove some non-OTM-effective statements, no matter the specific axioms. In particular, classical logic does not preserve effectiveness.
\end{remark}

\subsection{Realizability, reducibility, and truth}

It is easy to see that every OTM-effective statement must be true, for the OTM-computability of witnesses implies a fortiori the existence of witnesses. 
We already saw that HP, although true on the basis of classical first-order logic, is not OTM-effective. Thus, we note: 

\begin{proposition}{\label{truth and effectivity}}
OTM-effectivity is strictly weaker than classical truth. 
\end{proposition}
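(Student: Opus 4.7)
The plan is to prove both directions separately: the trivial implication that OTM-effectivity entails classical truth, and then exhibit HP as a true statement that is not OTM-effective, which establishes the strictness. The paper's own preceding remarks essentially lay out this decomposition, so my task is just to make each part precise.

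For the forward direction, I would unwind Definition~\ref{effectivizer}. Suppose $\phi \equiv \forall x_0 \exists y_0 \ldots \forall x_n \exists y_n \psi(x_0, y_0, \ldots, x_n, y_n)$ is OTM-effective, witnessed by an OTM-computable encoding $\hat{F}$ of some class-function effectivizer $F$. Then $F$ provides, for every sequence of universal choices $x_0, \ldots, x_n$, set-theoretic witnesses $F((x_0)), F((x_0, x_1)), \ldots$ on which the quantifier-free matrix $\psi$ is true. This is exactly a sequence of Skolem functions for $\phi$, so $\phi$ holds in $V$.

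For the strictness, I would take HP as the separator. Classically HP is immediate from the law of excluded middle applied pointwise: for each parameter-program $(P, \rho)$, either $P(\rho)\!\downarrow$, in which case one witnesses with $i = 1$, or $P(\rho)\!\uparrow$, in which case $i = 0$ works. On the other hand, HP admits no OTM-effectivizer, because any such effectivizer would produce, on input $(P, \rho)$, the correct bit $i \in \{0,1\}$ deciding whether $P(\rho)\!\downarrow$, yielding an OTM-decision procedure for the OTM halting problem. The latter is OTM-unsolvable by the standard diagonalization against a hypothetical halting-decider, which transfers verbatim from classical Turing machines to OTMs (see \cite{Koepke, CarlBuch}).

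The only technicality is that ``$P(\rho)\!\downarrow$'' is not literally a quantifier-free $\in$-formula, so HP must first be rewritten in genuine prenex normal form with quantifier-free matrix, e.g.\ by quantifying over putative OTM-computation sequences coded as sets of ordinals. This gives HP a definite finite quantifier complexity and makes Definition~\ref{effectivizer} formally applicable; the exact complexity plays no role, since the mere existence of an effectivizer already supplies the forbidden halting-decider. I do not expect any real obstacle: the undecidability of the OTM halting problem is the only nontrivial input, and it is invoked as a black box.
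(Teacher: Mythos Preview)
Your proposal is correct and matches the paper's own argument: the paper observes that OTM-effectivity implies truth since computable witnesses are in particular witnesses, and then cites HP as a classically true statement whose OTM-effectivizer would solve the OTM halting problem. The extra care you take in unwinding Definition~\ref{effectivizer} and in putting HP into genuine prenex normal form goes slightly beyond what the paper spells out, but the underlying idea is identical.
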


We note that, under $V=L$, truth is, however, equivalent to the existence of a \textit{definable} (but not necessarily OTM-computable) effectivizer:

\begin{proposition}{\label{definable effectivizers and L}}
In ZFC+$V=L$, it is provable, for every $\in$-formula $\phi$, that $\phi$ is equivalent to the existence of a definable effectivizer for $\phi$.
\end{proposition}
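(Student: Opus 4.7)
The plan is to prove both directions separately, with the forward direction being essentially trivial and the reverse direction exploiting the canonical global well-order $<_L$ available under $V=L$.

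First I would dispose of the easy direction: if $F$ is any effectivizer for $\phi = \forall x_0 \exists y_0 \ldots \forall x_n \exists y_n \psi$, then $\phi$ is true. Given arbitrary $x_0$, set $y_0 = F((x_0))$; given arbitrary $x_1$, set $y_1 = F((x_0,x_1))$; and so on. The defining property of an effectivizer then yields $\psi(x_0,y_0,\ldots,x_n,y_n)$, so each existential quantifier is witnessed in turn. This direction needs no appeal to $V=L$ or even to definability.

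For the substantive direction, assume $V=L$ and that $\phi$ holds. Using the (definable) canonical well-order $<_L$ on $L$, I would define $F$ recursively on the length of its input tuple. For a one-element input $(x_0)$, let $F((x_0))$ be the $<_L$-least $y_0$ such that
\[
\forall x_1 \exists y_1 \ldots \forall x_n \exists y_n\, \psi(x_0,y_0,x_1,y_1,\ldots,x_n,y_n)
\]
holds; such $y_0$ exists because $\phi$ is true. For an input $(x_0,\ldots,x_k)$ with $k \leq n$, let $F((x_0,\ldots,x_k))$ be the $<_L$-least $y_k$ such that the tail formula
\[
\forall x_{k+1} \exists y_{k+1} \ldots \forall x_n \exists y_n\, \psi(x_0, F((x_0)), x_1, F((x_0,x_1)), \ldots, x_{k-1}, F((x_0,\ldots,x_{k-1})), x_k, y_k, x_{k+1}, y_{k+1}, \ldots, x_n, y_n)
\]
holds; on inputs that are not such initial tuples, set $F$ to $\emptyset$ (say). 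An easy induction on $k$, using the definition at stage $k-1$, shows that the relevant tail formula is true when we reach stage $k$, so a $<_L$-least witness always exists. The resulting $F$ is clearly an effectivizer for $\phi$.

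The main point that requires care is definability: since $\phi$ is a fixed first-order formula, each of the tail formulas above is first-order, and $<_L$ is definable over $L$, so $F$ itself is definable by a single first-order formula (with $\phi$ built into the defining schema). This is the step where $V=L$ is genuinely used, since without a definable global well-order one cannot in general extract a definable choice of witness from mere existence; indeed, this is exactly why the analogous statement fails in ZFC alone when $\phi$ is, say, a form of AC. No more than this local, formula-by-formula definability is claimed, which matches the universal-closure formulation \textquotedblleft for every $\in$-formula $\phi$\textquotedblright\ in the statement.
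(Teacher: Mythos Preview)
Your proof is correct and takes essentially the same approach as the paper, which simply cites the existence of definable Skolem functions in $L$; you have spelled out explicitly how the canonical well-order $<_L$ yields such Skolem functions and hence a definable effectivizer. The only difference is presentational: the paper compresses your recursive least-witness construction into a one-line reference.
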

\begin{proof}
    This follows from the existence of definable Skolem functions in $L$, see, e.g., Schindler and Zeman \cite{SZ}.
\end{proof}

\begin{remark}
    This is not true in general, however; we are thus led to the general notion of ``definable effectivizer'' and, correspondingly, ``definable reducibility'', which results from the definitions of OTM-effectivizers and OTM-reductions by replacing OTM-computability with set-theoretical definability, and which we plan to take up in future work.
\end{remark}



We note that provability of the implication and OTM-realizability are both different from OTM-effective reducibility. The definition of OTM-realizability can be found in \cite{CGP}, Definition $17$. (Note that, in this definition, the OTM-realizer for a universally quantified statement $\forall{x}\psi(x)$ has access to $a$ when computing a realizer for $\psi(a)$.)

\begin{lemma}{\label{reducibility and provable implication}}
\begin{enumerate}
\item There are $\in$-sentences $\phi$ and $\psi$ such that $\phi\leq_{\text{OTM}}\psi$, but $\psi\rightarrow\phi$ is not provable in ZFC.
\item There are $\in$-sentences $\phi$ and $\psi$ such that $\psi\rightarrow\phi$ is provable in ZFC, but $\phi\nleq_{\text{OTM}}\psi$.
\item The same, in fact, holds for every consistent $\in$-theory that is sufficiently strong to formalize the relevant statements; in particular, this includes extensions of KP.
\item There are statements $\phi$ and $\psi$ in prenex normal form such that $\phi\rightarrow\psi$ is OTM-realizable, but $\psi\nleq_{\text{OTM}}\phi$.
\item If $\phi$ and $\psi$ are statements in prenex normal form such that $\psi\leq_{\text{OTM}}\phi$, then $\phi\rightarrow\psi$ is (uniformly) OTM-realizable.
\end{enumerate}
\end{lemma}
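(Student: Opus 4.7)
The plan is to prove each of the five parts by exhibiting concrete witnesses, exploiting the asymmetry between ZFC-provability, the OTM-computability of witnesses, and the OTM-realizability of implications.

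For (1), I would take $\phi := \text{Con}(\text{ZFC})$ and $\psi := \top$. Since $\text{Con}(\text{ZFC})$ is $\Pi_{1}$ and, in prenex normal form, carries no existential quantifier, its effectivizer is trivial whenever the statement is true, so $\phi \leq_{\text{OTM}} \psi$ holds vacuously. On the other hand, $\psi \rightarrow \phi$ is equivalent to $\text{Con}(\text{ZFC})$, which by G\"odel's second incompleteness theorem is not provable in ZFC. For (2), I would take $\phi := \text{HP}$ and $\psi := \top$. Here $\psi \rightarrow \phi$ is a theorem of ZFC (every OTM-computation either halts or does not), whereas $\text{HP} \leq_{\text{OTM}} \top$ would produce an OTM-program that, without any substantive oracle, computes an effectivizer for HP, contradicting the observation recorded after Definition \ref{principles} that HP is not OTM-effective.

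Part (3) is an immediate generalization: for any consistent theory $T$ formalizing the relevant notions, one replaces $\text{Con}(\text{ZFC})$ by $\text{Con}(T)$ in the argument for (1), while the obstruction used in (2) depends only on the OTM-non-effectivity of HP, which is independent of the background theory. For (4), the key idea is that if $\phi$ has no OTM-realizer, then $\phi \rightarrow \psi$ is vacuously OTM-realizable, since the defining condition ranges over an empty family of realizers. Concretely, I would take $\phi := \text{HP}$, which has effectivizers but none that are OTM-computable, and let $\psi := \text{HP}'$ denote the halting problem for OTMs equipped with a halting oracle for ordinary OTMs, i.e., the OTM-analogue of the first Turing jump. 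To show $\psi \nleq_{\text{OTM}} \phi$, observe that the unique effectivizer of HP (up to coding) is the OTM halting oracle itself, so a reduction would yield an OTM with halting oracle that solves its own halting problem, contradicted by the usual relativized diagonalization. Verifying that this diagonalization transfers cleanly to the paper's precise OTM setting with ordinal parameters is the main technical obstacle.

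For (5), given a reduction $P$ with parameter $\rho$ witnessing $\psi \leq_{\text{OTM}} \phi$, I would construct a uniform OTM-realizer of $\phi \rightarrow \psi$ by internal composition: for any OTM-program $Q$ that computes an encoded effectivizer of $\phi$, consider the program that simulates $P$ with parameter $\rho$ while intercepting each oracle call to the effectivizer of $\phi$ and answering it by running $Q$. The resulting composite program computes an encoded effectivizer of $\psi$ uniformly in $Q$, and thereby witnesses the required uniform OTM-realization of $\phi \rightarrow \psi$.
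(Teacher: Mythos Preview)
Your arguments for (2), (3), and (5) are correct and essentially coincide with the paper's.

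For (1), you choose $\phi = \text{Con}(\text{ZFC})$ and $\psi = \top$, whereas the paper uses HP and $\bot$. Your example is correct; just make explicit that the reduction $\text{Con}(\text{ZFC}) \leq_{\text{OTM}} \top$ presupposes $\text{Con}(\text{ZFC})$ in the metatheory, since a $\Pi_{1}$-sentence is OTM-effective if and only if it is true. With that caveat stated, your example is a clean witness for (1), and it generalizes to (3) in the way you describe.

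For (4), the paper's route is considerably simpler than yours. It takes $\phi =$ NextCard and $\psi = \bot$: NextCard is not OTM-realizable (an OTM cannot output a set of cardinality exceeding that of its input and parameter), so NextCard $\rightarrow \bot$ is vacuously OTM-realizable; but NextCard has the obvious effectivizer $\kappa \mapsto \kappa^{+}$ while $\bot$ has none, so $\bot \nleq_{\text{OTM}}$ NextCard is immediate, with no diagonalization needed. Your HP/HP$'$ example is correct in outline and in fact yields a sharper separation (both sentences admit effectivizers), but the relativized diagonalization you flag as a technical obstacle is a real extra cost that the paper's choice of $\psi = \bot$ avoids entirely.
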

\begin{proof}
\begin{enumerate}
\item Let $\phi$ be HP and let $\psi$ be $0=1$. Then $\psi\leq_{\text{OTM}}\phi$ is vacuously true (since HP has no effectivizers), but, since $\phi$ is a logical tautology, $\phi\rightarrow\psi$ is not provable in ZFC (unless ZFC is inconsistent).
\item An obvious example is $0=0\rightarrow\text{HP}$.
\item The same arguments work in the more general case.
\item Let $\phi$ be NextCard (see Definition \ref{principles}) and $\psi$ be $0=1$. By \cite{CGP}, Lemma $41$, 
 NextCard is not OTM-realizable (since OTM-programs cannot output objects whose cardinality is larger than the length of their input and the size of their parameter), so $\phi\rightarrow\psi$ is trivially OTM-realizable. But NextCard has the (obvious) effectivizer $\kappa\mapsto\kappa^{+}$, so we cannot have $\psi\leq_{\text{OTM}}\phi$. 
\item Let $r$ be an OTM-realizer for $\phi$. Then $r$ induces, in an OTM-computable way, an OTM-computable effectivizer $F$ for $\phi$. By definition of $\psi\leq_{\text{OTM}}\phi$, we can compute an effectivizer $G$ for $\psi$ relative to $F$. By combining the reduction with the realizer for $\phi$, we obtain a realizer for $G$. 
\end{enumerate}    
\end{proof}

\begin{remark}
    We note that (5) depends crucially on the formula's being given in prenex normal form. 
\end{remark}



\section{Reducibility between basic set theoretical principles}

The main goal of this paper is to determine which reducibilities hold between the statements introduced above. For this, it will come in handy that evaluating truth predicates reduces to separation operators:

\begin{lemma}{\label{truth and sep}}
\begin{enumerate}
\item For all $n\in\omega$, there is an OTM-program which, relative to any effectivizer $F_{\text{n-Sep}}$ for $\Sigma_{n}$-separation, decides truth of $\Sigma_{n}$-formulas. 

\item (Fef) There is an OTM-program which, relative to any effectivizer $F_{\text{Sep}}$ for Sep, decides truth of arbitary $\in$-formulas. 
\end{enumerate}
In fact, the reduction can be done by calling $F_{\text{Sep}}$ only once.
\end{lemma}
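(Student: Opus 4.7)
The plan is to reduce truth-evaluation to a single, carefully chosen instance of separation. The central observation is that separation lets us decide any sentence $\theta(p)$ because the \emph{trivial} separation instance of $\theta(p)$ over a fixed one-element set acts as a truth indicator: if we set
\[
\phi_{\theta}(z,p) \;:\equiv\; (z=z) \wedge \theta(p),
\]
then $\phi_{\theta}$ has the same (prenex) complexity as $\theta$, and the set
\[
Y \;=\; \{\,z \in \{\emptyset\} : \phi_{\theta}(z,p)\,\}
\]
equals $\{\emptyset\}$ if $\theta(p)$ holds and $\emptyset$ otherwise. Reading off which of these two sets a code of ordinals represents is trivially OTM-computable.

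For part (1), I would fix an OTM-program $P$ that, given input $(\lceil \theta \rceil, p)$ with $\theta$ a $\Sigma_{n}$-formula, computes the index $\lceil \phi_{\theta} \rceil$ (a simple syntactic rewriting on the scratch tape), constructs a code $c$ for the tuple $(\{\emptyset\}, p)$, performs a single call to the separation effectivizer $F_{n\text{-Sep}}(\lceil \phi_{\theta}(p)\rceil)$ on the code it has just written, and then inspects the returned code. Since $\phi_{\theta}$ is $\Sigma_{n}$ whenever $\theta$ is, the call falls within the scope of a $\Sigma_{n}$-separation effectivizer, and the output bit $i \in \{0,1\}$ (set to $1$ iff the returned code represents $\{\emptyset\}$) is the truth value of $\theta(p)$.

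For part (2), exactly the same program works verbatim, but with $F_{n\text{-Sep}}$ replaced by $F_{\text{Sep}}$ and no restriction on the quantifier complexity of $\theta$. The Feferman framework is precisely what makes the single call to $F_{\text{Sep}}$ well-defined here: the partial OTM-computation relative to the full separation effectivizer can be evaluated inside some $V_{\kappa}\prec V$ with $\kappa \in C$, as explained in the paragraph preceding the lemma. In both cases, only one invocation of the separation effectivizer is made, which establishes the final clause.

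The step that requires some care, but is not a real obstacle, is the bookkeeping: we must check that converting $\lceil \theta \rceil$ to $\lceil \phi_{\theta} \rceil$ and packaging the pair $(\{\emptyset\}, p)$ into the input format expected by an encoded effectivizer for the \emph{scheme} (a function of $\omega \times V$) are both uniformly OTM-computable, which is immediate from the standard coding conventions for formulas and finite tuples of sets of ordinals. Everything else is genuinely a one-line argument once the trivial-separation trick is in place.
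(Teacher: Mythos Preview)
Your proposal is correct and follows essentially the same approach as the paper: apply the separation effectivizer to the singleton $\{\emptyset\}$ with the target formula (treated as a formula whose truth does not depend on the bound variable), then output ``true'' or ``false'' according to whether the returned set is non-empty. Your write-up is more explicit about the syntactic bookkeeping and the role of the Feferman hypothesis, but the underlying one-call trick is identical.
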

\begin{proof}
Given a $\Sigma_{n}$-formula $\phi$ and a parameter $\vec{p}$, obtain $F_{\text{n-Sep}}(\lceil\phi(\vec{p})\rceil,\{\emptyset\})$, then return ``true'' if and only if the result is non-empty, and ``false'' otherwise.
\end{proof}


For the record, we note that this also works in the reverse direction by the obvious strategy: 

\begin{lemma}{\label{sep and truth}}
Let $F_{\Sigma_{n}-\text{truth}}$ be a (class) function which, for every $\Sigma_{n}$-formula $\phi(\vec{x})$ with free variables in $\vec{x}$ and any tuple $\vec{a}$ of sets of the right length, $$F_{\Sigma_{n}-\text{truth}}(\phi,\vec{a})=\begin{cases}1\text{, if }\phi(\vec{a})\\0\text{, otherwise}\end{cases}.$$ Then an effectivizer for $\Sigma_{n}$-separation is OTM-computable relative to $F_{\Sigma_{n}-\text{truth}}$.
\end{lemma}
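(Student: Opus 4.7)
The plan is to give a direct construction: an OTM-program that, on input a code $\lceil \phi(z,p)\rceil$ of a $\Sigma_n$-separation instance (so an index for a $\Sigma_n$-formula $\phi$ in one distinguished free variable $z$, together with parameters $p$ and the set $X$ to be separated), enumerates the elements of $X$, queries the oracle $F_{\Sigma_n\text{-truth}}$ on each one, and collects those that pass the test into the output.

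First I would fix, once and for all, OTM-computable procedures for manipulating codes of sets as sets of ordinals: decoding a code $c_X$ of $X$ to obtain, uniformly, codes $c_z$ for each $z\in X$; and, given codes for $\phi$, $z$, and $p$, constructing a code for the instantiated formula $\phi(z,p)$ (this is just syntactic substitution on codes of formulas, which is elementary and clearly OTM-computable). The fact that $\phi(z,p)$ is still $\Sigma_n$ after substitution is immediate because substitution of set parameters for free variables does not change the quantifier prefix.

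Next, the main loop: iterate over $z\in X$; for each $z$ call the oracle on the code $(\lceil\phi\rceil,(z,p))$; if the oracle returns $1$, mark $c_z$ for inclusion. This iteration can be arranged in the standard OTM fashion by running through the ordinals up to the supremum of $c_X$ and restricting attention to those that code elements of $X$. Finally, assemble the codes of the accepted $z$ into a single code $c_Y$ of the set $Y=\{z\in X:\phi(z,p)\}$, and output $c_Y$. Since the effectivizer for $\Sigma_n$-Sep is required to return $Y$ on input $(\lceil\phi(p)\rceil, X)$, this is exactly the encoded effectivizer demanded by Definition~\ref{effectivizer}.

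There is no real obstacle here; the only point worth noting is the bookkeeping for schemes: the effectivizer for $\Sigma_n$-Sep takes both the formula index and the parameter/input as arguments, and one must thread the formula index $\lceil\phi\rceil$ (and the parameter $p$) through the oracle call so that each query to $F_{\Sigma_n\text{-truth}}$ is on the correctly instantiated $\Sigma_n$-formula. Since the same fixed OTM-program handles all indices $\lceil\phi\rceil$ uniformly, this gives a single program witnessing the reduction of $\Sigma_n$-Sep to $F_{\Sigma_n\text{-truth}}$, as required.
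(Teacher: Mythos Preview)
Your proposal is correct and follows essentially the same approach as the paper: iterate through the elements of the given set, query the truth oracle on each one with the given formula and parameters, and collect the elements for which the oracle returns $1$. The paper's proof is simply a one-sentence version of what you have written out in more detail.
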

\begin{proof}
    Given a code for a set $b$, a $\Sigma_{n}$-formula $\phi(x,\vec{p})$ and a tuple $\vec{a}$ of sets, run through $b$, applying $F_{\Sigma_{n}-\text{truth}}$ to $(\phi,(c,\vec{a}))$ for every $c\in b$, and collecting together those elements for which the answer is positive.
\end{proof}

\begin{remark}
    Note that, in contrast to the reduction of truth to separation, the reduction described here does not work with a single call to the effectivizer; in fact, on input $x$, it will make $\text{card}(x)$ many such calls. As we will see in a separate paper (\cite{reduction complexity}), this is optimal.
\end{remark}

\begin{lemma}{\label{order properties}}
$\leq_{\text{OTM}}$ is a reflexive, transitive order on the set of $\in$-sentences in prenex normal form with minimal element $\top$ and maximal element $\bot$.
Moreover, for any $\phi<_{\text{OTM}}\bot$, there is $n\in\omega$ such that $\phi<_{\text{OTM}}\Sigma_{n}-\text{TND}$.
\end{lemma}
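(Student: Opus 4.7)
The plan is to verify each clause in turn. Reflexivity is witnessed by the OTM-program that simply copies its oracle input to the output. For transitivity, given $\phi\leq_{\text{OTM}}\psi$ via $(P,\rho)$ and $\psi\leq_{\text{OTM}}\chi$ via $(Q,\sigma)$, I would compose: on an encoded effectivizer $F_\chi$ for $\chi$, run $P(\rho)$, but simulate every oracle call that $P$ makes to $F_\psi$ by invoking $Q^{F_\chi}(\sigma)$ on the relevant input and piping the result back to $P$. For the bottom element $\top\equiv(0{=}0)$, which is quantifier-free and true and so trivially has the empty effectivizer, the program that ignores its oracle and writes this trivial effectivizer witnesses $\top\leq_{\text{OTM}}\phi$ for every $\phi$. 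For the top element $\bot\equiv(0{=}1)$, no effectivizer exists at all, so the universally quantified clause ``for every effectivizer of $\bot$'' in the definition of $\phi\leq_{\text{OTM}}\bot$ is vacuously satisfied for every $\phi$.

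For the ``moreover'' clause, note first that $\phi<_{\text{OTM}}\bot$ forces $\phi$ to possess at least one effectivizer, for otherwise $\bot\leq_{\text{OTM}}\phi$ would hold vacuously and contradict $\bot\not\leq_{\text{OTM}}\phi$; in particular $\phi$ is true. Writing $\phi$ in prenex normal form with $k$ alternations, I would then establish $\phi\leq_{\text{OTM}}\Sigma_m\text{-TND}$ for some $m$ depending on $k$ (roughly $m=k+1$): a $1$-effectivizer for $\Sigma_m$-TND supplies a $\Sigma_m$-truth predicate, and we build witnesses stage by stage, the assertion ``this partial tuple can be extended to a full witnessing sequence'' being of bounded quantifier complexity ($\Sigma_m$) and hence answerable by one oracle query. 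We iterate in OTM-time through ordinal codes for candidate sets, using the Feferman assumption (a proper class of $\kappa$ with $V_\kappa\prec V$) to bound the search to a set-sized initial segment in which a witness is guaranteed to occur.

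For strictness, I would appeal to the strict increase of the hierarchy $(\Sigma_m\text{-TND})_m$ under $\leq_{\text{OTM}}$: by a Tarski-style argument, $\Sigma_{m+1}\text{-TND}\not\leq_{\text{OTM}}\Sigma_m\text{-TND}$, since deciding an existential $\Sigma_{m+1}$-formula from a $\Sigma_m$-oracle would require a proper-class search that no OTM-computation (whose runtime is a set-sized ordinal, and whose oracle queries are coded by sets of ordinals) can perform uniformly correctly. Choosing $n$ large enough that already $\phi\leq_{\text{OTM}}\Sigma_{n-1}\text{-TND}$, transitivity then forces $\Sigma_n\text{-TND}\not\leq_{\text{OTM}}\phi$---else composition would collapse the hierarchy---and combined with $\phi\leq_{\text{OTM}}\Sigma_n\text{-TND}$ by monotonicity in $n$, this yields the desired $\phi<_{\text{OTM}}\Sigma_n\text{-TND}$.

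The main obstacle is the rigorous verification of the strict increase of the TND-hierarchy. One must carefully delimit what a set-sized-runtime OTM-computation with a $\Sigma_m$-truth oracle can output and exhibit a concrete $\Sigma_{m+1}$-question that escapes every such computation; the informal proper-class-search intuition above has to be turned into a diagonal construction along the ordinal bound of the putative reduction's runtime.
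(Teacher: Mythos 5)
The routine part of your argument (reflexivity, transitivity, $\top$ minimal, $\bot$ maximal by vacuity) is exactly what the paper dismisses as trivial, and your observation that $\phi<_{\text{OTM}}\bot$ forces $\phi$ to have an effectivizer and hence to be true is also the paper's starting point for the ``moreover'' clause. The problem lies in how you then reduce $\phi$ to $\Sigma_m$-TND: you use only the truth bit (a $1$-effectivizer, i.e.\ a $\Sigma_m$-truth predicate) and propose to \emph{find} the witnesses by iterating ``through ordinal codes for candidate sets'' and querying the oracle whether a partial tuple extends to a witnessing sequence. An OTM cannot perform this search. It can enumerate codes for constructible sets, but not for the elements of $V_\kappa$ --- it cannot even enumerate $\mathfrak{P}(\omega)$ --- so if the witnesses for $\phi$'s existential quantifiers lie outside $L$, the search never succeeds; the Feferman-style reflection only bounds \emph{where} a witness lives, not how to enumerate candidates. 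This is not a reparable presentation issue but a provable failure: take $\phi=\text{Pot}$, which is true and has an effectivizer, so the ``moreover'' clause applies to it. By Lemma \ref{sep and truth} a $\Sigma_n$-truth predicate computes an effectivizer for $\Sigma_n$-Sep, and by Theorem \ref{pot}(6) it is consistent with ZFC that Pot is not OTM-reducible to $\Sigma_n$-Sep for any $n$; hence a truth oracle alone cannot, in general, produce power sets, and your reduction strategy breaks down exactly at the witness-finding step.

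The paper's proof avoids any search by exploiting the way $\Sigma_n$-TND is formulated: its matrix is preceded by the full quantifier prefix $QQ_{\neg}$ of the truth predicate, so a \emph{full} effectivizer $F$ for $\Sigma_n$-TND does not merely return the bit $i$ but already hands over witnesses for the existential quantifiers of $T_n(k,p)$. Taking $k=\lceil\phi\rceil$ (so that $F((k))=1$ because $\phi$ is true) and setting $G((a_0,\dots,a_j)):=F((k,a_0,\dots,a_j))$ yields an effectivizer for $\phi$ directly, with a single trivial transformation and no enumeration of candidate sets. You should rework your argument along these lines; once you do, the elaborate machinery you flag as the ``main obstacle'' (the strict increase of the TND hierarchy, established by diagonalizing along the runtime bound) is no longer the load-bearing part of the reduction, though the question of strictness of $\phi<_{\text{OTM}}\Sigma_n$-TND does still need to be addressed separately.
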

\begin{proof}
All claims except for the last one are trivial. 

To see the last claim, let $\phi:\Leftrightarrow\forall{x_{0}}\exists{y_{0}}...\forall{x_{n}}\exists{y_{n}}\psi(x_{0},y_{0},...,x_{n},y_{n})$, where $\psi$ is quantifier-free (in order to simplify our notation, we suppress parameters). Assume that $\bot\nleq_{\text{OTM}}\phi$, and let $F$ be an effectivizer for $\Sigma_{n}$-TND. This implies that $\phi$ has an effectivizer (though not necessarily an OTM-computable one), as every statement trivially reduces to a statement that has no effectivizers. In particular then, $\phi$ is true. Let $k$ be the index of $\phi$ in the enumeration of formulas. Thus, $F((k))=1$. Letting $G((a_{0},...,a_{j})):=F((k,a_{0},...,a_{j}))$ for $j\leq n$ will then yield the desired effectivizer for $\phi$.

\end{proof}


We now start by noting some reducibility results concerning the cardinality principles.

\begin{proposition}{\label{card and hp}}
\begin{enumerate}
    \item GreaterCard$\leq_{\text{oW}}$NextCard.
    \item NextCard$\equiv_{\text{OTM}}$DecCard
    \item NextCard$\equiv_{\text{OTM}}$GreaterCard is independent of ZFC.
    \item Card$\equiv_{\text{oW}}$OrdCard
    \item Card$\equiv_{\text{OTM}}$DecCard
    \item HP$\leq_{\text{OTM}}$GreaterCard.
    \item Card$\leq_{\text{oW}}^{1,1}$Pot
    \item Card$\leq_{\text{OTM}}$Pot is independent of ZFC.
    \item NextCard$\leq_{\text{OTM}}\Sigma_{1}$-Sep.
    \item GreaterCard$\nleq_{\text{oW}}^{3,1}\Sigma_{n}$-Sep for all $n\in\omega$.
    \item If $V=L$, then HP$\equiv_{\text{OTM}}$GreaterCard
\end{enumerate}
(In particular, NextCard and GreaterCard are not OTM-effective.)\footnote{This statement, which amounts to the undecidability of the class of cardinals on an OTM, is well-known in the literature; the first explicit mention we know of is in Dawson \cite{Dawson}.}
\end{proposition}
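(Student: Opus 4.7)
I would organize the proof into three groups corresponding to (A) direct positive reductions, (B) interreducibilities among the cardinality principles, and (C) the non-reducibility (10) and the independence results (3), (8), (11), which I expect to carry the main technical difficulty.

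For (A), part (1) is a one-call reduction by definition. For (9), observe that ``$\beta$ is not a cardinal'' is the $\Sigma_1$ statement $\exists f\,\exists\gamma{<}\beta\ (f\colon\gamma\to\beta\text{ a bijection})$; Lemma~\ref{truth and sep} with $n=1$ therefore lets a $\Sigma_1$-separator decide cardinalhood of any ordinal, so that the least cardinal above a given $\alpha$ is located by upward ordinal search. For (6), I would use the standard OTM fact that a halting computation on parameter $\rho$ terminates before some cardinal that is fixed given the program and computable from $\rho$; GreaterCard supplies such a cardinal, after which plain simulation decides halting. For (7), one call of Pot to a set carrying both $x$ and a sufficiently long ordinal yields all candidate bijections between ordinals and $x$; since a $1$-effectivizer for Card only needs to return the cardinality, it suffices to pick out the least ordinal for which a bijection exists, and no choice principle beyond the ZFC-provided well-orderability of $x$ is needed.

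For (B), parts (2), (4), (5) all turn on the characterization that an ordinal $\alpha$ is a cardinal iff $\beta^{+}\le\alpha$ for every $\beta<\alpha$, which lets DecCard and NextCard interreduce by bounded ordinal search; Card and OrdCard coincide because a set is presented to the OTM as a set of ordinals, whose coded order type relates in a standard way to its cardinality; and Card and DecCard interreduce by the same kind of search, supplemented by explicit bijection construction where a full effectivizer is required.

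The main difficulty lies in (C). For (10), I would exploit that the ordinals occurring in the output of $P(\alpha)$ are uniformly bounded above by some fixed function of $|\alpha|$ (essentially $|\alpha|^{+}$), so a single-call oW-reduction to $\Sigma_n$-Sep would have to prepare a query set already containing a cardinal above $\alpha$ in an OTM-uniform way, which is impossible. For (11), under $V=L$ Proposition~\ref{definable effectivizers and L} together with the fact that HP decides $\Sigma_1^L$-truth allows the search for the least cardinal above $\alpha$ to be carried out in $L$ with an HP oracle. The independences (3) and (8) then combine a $V=L$ positive direction (using the same definability tools) with a model of ZFC in which the effectivizer for the weaker principle can be chosen ``uncooperatively'' --- GreaterCard returning a cardinal far above the immediate successor, or Pot returning a power set without an OTM-recoverable well-ordering --- so that the finer witness cannot be extracted. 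I expect the construction of such uncooperative effectivizers, and the verification that they foil every OTM reduction, to be the principal technical obstacle, as it requires careful definability control over the ambient model, presumably via forcing.
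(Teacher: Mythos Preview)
Your outline for parts (1), (2), (4)--(7), (9)--(11) is essentially the paper's argument, with only cosmetic differences (e.g.\ the paper applies Pot to $x\times x$ and extracts the minimal order type among all well-orderings of $x$, rather than working with bijections to an ordinal; your variant also works since a code for $x$ already supplies a long enough ordinal).

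The genuine gap is in your plan for the independence results (3) and (8). The ``uncooperative effectivizer'' strategy does not do the work you want it to.

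For (3): merely having a GreaterCard-effectivizer return cardinals far above the successor is harmless in $L$, since the paper's positive argument uses exactly this: given \emph{any} cardinal $\kappa>\alpha$, the ordinals that are cardinals in $L_{\kappa}$ are precisely the $L$-cardinals below $\kappa$, so the reduction succeeds for every effectivizer. To obtain failure you must change the \emph{model}, not the effectivizer. The paper forces over $L$ to encode a Cohen-generic $g\subseteq\omega$ into the successor-cardinal structure (via an Easton-type product collapsing selected $L$-cardinals according to the bits of $g$), and then takes an $L$-\emph{definable} GreaterCard-effectivizer $F$ in the extension. Any putative reduction $P^{F}(\cdot,\rho)$ would then be $L$-definable, hence the sequence $(\aleph_k^{M})_{k<\omega}$ and with it $g$ would lie in $L$---a contradiction. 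The key idea is a definability/genericity argument, not adversarial behaviour of $F$.

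For (8): Pot has a \emph{unique} value on each input, so there is no room for an uncooperative choice of $\mathfrak{P}(x)$ itself; at most the encoding could vary, and that is not enough. The paper instead observes that a \emph{full} effectivizer for Card (returning a bijection, not just the cardinality) yields an effectivizer for the well-ordering principle, hence for AC; and then invokes the later result that AC is not ZFC-provably reducible to Pot (which in turn rests on the HOD analysis). So the negative half of (8) is a forward reference rather than a self-contained construction.
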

\begin{proof}
\begin{enumerate}
    \item Since the cardinal successor of an ordinal $\alpha$ is in particular a greater cardinal than $\alpha$, any effectivizer for NextCard is also one for GreaterCard.
    \item Let $F$ be an effectivizer for DecCard, and let $\alpha$ be an ordinal. Starting with $\alpha+1$, run through the ordinals and apply $F$ to each of them until the output is $1$; then return the first ordinal for which this is the case.

    On the other hand, let an effectivizer $F$ for NextCard and an ordinal $\alpha$ be given. Relative to $F$, the sequence of cardinals is computable by starting with $0$, applying $F$ at successor levels and taking unions at limit levels. Compute this sequence up to the point where it contains an element that is strictly larger than $\alpha$ (which will always happen at a successor level); then return its predecessor. 
    \item By (1), it only remains to consider NextCard$\leq_{\text{OTM}}$GreaterCard.

    This is true in $L$: Let $F_{\text{gc}}$ be an effectivizer for GreaterCard, and let $\alpha$ be an ordinal. Then $F((\alpha))$ will be a cardinal greater than $\alpha$. Using standard techniques for OTMs (cf., e.g., \cite{CarlBuch}, Lemma 3.5.3), 
    we can then compute a code for $L_{F((\alpha))}$ from $F(\alpha)$. By basic fine-structure theory, an ordinal $\beta$ will be a cardinal in $L_{F((\alpha))}$ if and only if it is a cardinal in $L$. We can thus search through $L_{F((\alpha))}$, using the ability of OTMs to evaluate set-theoretic truth in a given structure (cf., e.g., Koepke, \cite{Koepke}, Lemma 6.1 or \cite{CarlBuch}, Theorem 2.3.28) to check whether $L_{F((\alpha))}$ contains a cardinal $\kappa$ above $\alpha$. If so, we return $\kappa$ as the next cardinal after $\alpha$; otherwise, we return $F((\alpha))$.

    We now show how to obtain a model of ZFC in which NextCard$\nleq_{\text{OTM}}$GreaterCard. 
    First, use Cohen-forcing to obtain a real number $g\subseteq\omega$ that is Cohen-generic over $L$. The idea is now to code $g$ into the cardinal structure of $M$. To this end, consider a forcing extension $M$ of $L$ in which, for all $k\in\omega$, $(\aleph_{2k+1})^{M}=((\aleph_{2k}^{M})^{+})^{L}$ if and only if $k\in g$, and otherwise, $(\aleph_{2k+1})^{M}=((\aleph_{2k}^{M})^{++})^{L}$. That is, the cardinal successor of $\aleph_{2k}^{M}$ in $M$ is the next $L$-cardinal after $\aleph_{2k}^{M}$ if $k\in g$, and otherwise, it is the second-next, while all other cardinals remain as in $L$. Such an extension of $L$ can, e.g., be obtained by Easton-forcing (see, e.g., \cite{Kunen}, p. 262f). 
    Let us assume for a contradiction that, in $M$, $(P,\rho)$ effectively reduces NextCard to GreaterCard. Now, let $F$ be the class function which sends an ordinal $\alpha\in[\aleph_{k}^{M},\aleph_{k+1}^{M})$ to $(\aleph_{2k})^{L}$ for $k\in\omega$ and all other ordinals $\alpha$ to $(\alpha^{+})^{L}$. Then, in $M$, $F$ is an effectivizer of GreaterCard. By definition, $F$ is definable in $L$. Thus, if $G$ is the effectivizer of NextCard obtained via $(P,\rho)$ from $F$, then $G$ is definable in $L$ as well. Then, using the definition of NextCard, we can define in $L$ the sequence $s:=(\aleph_{k}^{M}:k\in\omega)$. But then, relative to $s$, $g$ is definable (by definition of $s$), so we have $g\in L$, a contradiction. Hence $M\models$NextCard$\nleq_{\text{OTM}}$GreaterCard, as desired.
    \item Clearly, OrdCard$\leq_{\text{oW}}$Card, as every instance of OrdCard is an instance of Card. For the reverse direction, let a code $c_{a}$ for a set $a$ be given. This in particular encodes an injection $f$ from $a$ into the ordinals. From $f$, we can easily compute the order type $\gamma$ of $f[a]$. Finally, we have $\text{card}(a)=\text{card}(\gamma)$.
    \item To reduce DecCard to Card, just apply a Card-effectivizer to a given ordinal $\alpha$ and then return $1$ if the output agrees with $\alpha$ and $0$, otherwise. This actually shows DecCard$\leq_{\text{oW}}$Card.
    
    To reduce Card to DecCard, let a set $a$ be given. By (3), we can assume without loss of generality that $a$ is an ordinal. Let us reserve an extra scratch tape $t$ initially filled with zeroes. Now, starting with $0$, successively run through the ordinals, using the DecCard-effectivizer in each step to check whether that ordinal $\alpha$ is a cardinal, and, if it is, write $1$ to the first $\alpha$ many cells of $t$. Once the computation arrives at $a+1$, exactly the first $\text{card}(a)$ cells of $t$ will be filled with $1$s, which represents a code for $\text{card}(\alpha)$. 
    \item It is well-known (see, e.g., the proof of \cite{CarlBuch}, Lemma 8.6.3) that, for an OTM-program $P$, a parameter $\rho\in\text{On}$ and an infinite cardinal $\kappa>\rho$, $P(\rho)$ will either halt in less than $\kappa$ many steps or it will not halt at all. Thus, given an effectivizer for GreaterCard, it suffices to run $P(\rho)$ for $\text{max}\{F((\rho)),F((\omega))\}$ many steps to determine whether it will halt.
    \item Let $\hat{F}$ be an encoded effectivizer for Pot, and let $x$ be an arbitrary set. To compute its cardinality, we use $\hat{F}$ to obtain a code for $p:=\mathfrak{P}(x\times x)$. This set will in particular contain all well-orderings of $x$. It is well-known\footnote{This was originally shown in Hamkins and Lewis, \cite{Hamkins-Lewis}, Theorem 2.1, and then easily generalizes to higher order types (see, e.g., \cite{CarlBuch}, Theorem 2.3.25).} that OTMs can check sets for being well-orderings. Thus, we can run through $p$, identify all elements which are well-orderings of $x$, and compute their order-type. Finally, we return the minimum of the order-types thus obtained.
    \item If $V=L$, we can, as in the proof of (4), compute the set of all well-orderings of $x$ of minimal length and then output the $<_{L}$-minimal of these. Thus, it is not provable in ZFC that Card is not OTM-reducible to Pot.

    In general, a full effectivizer for Card would allow one to compute a well-ordering for an arbitrary given set; thus, it would allow for an effectivizer of the well-ordering principle WO. Since AC$\leq_{\text{oW}}$WO (see \cite{Ca2018}, Proposition 13.3), and this reduction is provable in ZFC, the provability of the reducibility of Card to Pot in ZFC would imply that AC provably in ZFC reduces to Pot. However, as we will see below (Proposition \ref{choice and friends}(3)), this is not the case.


    \item Let $\alpha\in\text{On}$ be given. The statement ``$x$ not a cardinal'' is $\Sigma_{1}$ (it states the existence of a bijection between $x$ and an element of $x$). By the reducibility of $\Sigma_{1}$-truth to $\Sigma_{1}$-Sep, we can, relative to an effectivizer $F$ for $\Sigma_{1}$-Sep and starting with $\alpha+1$, run through the ordinals, test each one with $F$ for not being a cardinal, and output the first ordinal for which the answer is negative. 
    \item By Lemma \ref{oW and cardinality},\footnote{We sincerely apologize for referring to a lemma succeeding the statement in question; however, the statement best belongs in this context, while moving the Lemma \ref{oW and cardinality} up would disrupt the flow of the paper.} a construction that raises cardinals -- such as GreaterCard -- cannot be oW-reducible to a construction that does not. Since the set $S$ obtained from a given set $X$ via separation will satisfy $\text{tc}(S)\subseteq\text{tc}(X)$, this suffices to prove the statement.
    \item If $V=L$, consider the OTM-program $P_{\text{card-check}}$ that, in the input $\alpha$, enumerates $L$,\footnote{Such a program exists by the work of Koepke; see, e.g., \cite{Koepke}.} searches for a bijection that collapses $\alpha$ to some smaller ordinal and halts once such an ordinal is found. This program will terminate if and only if $\alpha$ is a cardinal in $L$. Thus, giving access to an effectivizer of HP, an OTM can identify $L$-cardinals. Now, given some ordinal $\alpha$, we can simply count upwards from $\alpha$, checking each arising ordinal for being an $L$-cardinal, and halting once we have found one.
\end{enumerate}
\end{proof}

\begin{remark}
    It is worthwhile to briefly ponder the contrast between (4) and (5) of Theorem \ref{card and hp}: While one application of Pot is enough to determine the cardinality of an arbitrary set, this does not mean that we can effectively compute well-orderings. In other words, we can compute the right cardinal, but we cannot pick a bijection witnessing that it is indeed the right one. The difficulty here is that, although $\mathfrak{P}(x\times x)$ will contain many well-orderings of $x$ of minimal length, we have in general no way to chose one of these. We will come back to this in our treatment of the axiom of choice below.
\end{remark}

\begin{remark}
    To reduce OrdCard (or Card) to DecCard, the reduction described in the above proof applies DecCard $\text{card}(a)+1$ many times in the input $a$. Can we get away with less applications? How many are needed? Such questions concerning \textit{reduction complexities} are treated in \cite{reduction complexity}.

\end{remark}

In \cite{Ca2018}, Lemma $7$, it was stated with only a rather sketchy argument that relations that raise cardinalities cannot be OTM-reducible to relations that do not.\footnote{After writing this article, it was brought to our attention that this fact was already pointed out in the Master's thesis of Jasper Stammes, see Lemma 4.1 on p. 33 of \cite{StammesMaster}.} Unfortunately, as Lemma \ref{powercard}(5) below demonstrates, this is false; the statement is true for oW-reducibility, but not for OTM-reducibility. (We note that this still covers all applications of Lemma $7$ made in \cite{Ca2018}.) We give here the corrected version (which is a variant of Hodges' ``cardinality method'' explained in \cite{Hodges}), with a more careful argument.



\begin{defini} (Cf. Hodges, \cite{Hodges}, p. 145)
    We say that a partial (class) function $F:V\rightarrow \text{On}$ \textit{raises cardinalities} 
     if and only if, for any cardinal $\kappa$, there is a set $x$ such that $\text{card}(\text{tc}(x))>\kappa$ and $\text{card}(F(x))>\text{card}(\text{tc}(\{x\}))$. 
\end{defini}

\begin{lemma}{\label{oW and cardinality}} 
Let $F$ and $G$ be partial class functions from $V$ to $\text{On}$. If $G$ raises cardinalities and $G$ is OTM-computable relative to $F$ by a parameter-program $(P,\rho)$ such that $P^{F}(a,\rho)$ makes only finitely many calls to $F$, then $F$ raises cardinalities.
\end{lemma}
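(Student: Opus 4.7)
I would argue the contrapositive: assuming that $F$ does not raise cardinals, I show that $G$ does not raise cardinals either. The first step is to rephrase the hypothesis on $F$ as a single uniform bound. By assumption there is a cardinal $\kappa_{0}$ such that $\text{card}(F(s))\le\text{card}(\text{tc}(\{s\}))$ whenever $\text{card}(\text{tc}(s))>\kappa_{0}$. The remaining ``small'' sets form a set (they all lie inside a fixed level of the cumulative hierarchy), so $F$ maps them into a bounded set of ordinals; enlarging $\kappa_{0}$ if necessary, I may assume the uniform bound $\text{card}(F(s))\le\max(\kappa_{0},\text{card}(\text{tc}(\{s\})))$ for \emph{every} $s\in\text{dom}(F)$.

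Next I would analyze the computation $P^{F}(a,\rho)$ segment by segment. By hypothesis, there are only finitely many oracle calls, say $n$, so the run decomposes into $n+1$ pure (oracle-free) OTM-subcomputations punctuated by $n$ oracle invocations. Fix an input $a$ with $\text{card}(\text{tc}(a))>\kappa_{0}+|\rho|$ and set $\lambda:=\text{card}(\text{tc}(\{a\}))$, so $\lambda>\kappa_{0}$ and $\lambda>|\rho|$. I would prove by induction on the segment index the invariant that at the start and end of every segment, every ordinal appearing on any tape is strictly below $\lambda^{+}$. For the base case, the initial configuration contains only the code of $a$ (which can be chosen inside $\lambda$) and the parameter $\rho<\lambda^{+}$. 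For the inductive step, a pure OTM-subcomputation starting from tape content with cell indices below $\lambda^{+}$ either halts or reaches its next $F$-query within fewer than $\lambda^{+}$ many steps, by the standard cardinality bound for OTM-computations (the same one invoked in the proof of Proposition \ref{card and hp}(6)). Hence any query $s_{i}$ is coded by a set of cardinality at most $\lambda$, so $\text{card}(\text{tc}(\{s_{i}\}))\le\lambda$; the uniform bound then forces $F(s_{i})<\lambda^{+}$, so writing the response back to the tape preserves the invariant.

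After the final segment the machine halts and its output $G(a)$ is coded by tape content living below $\lambda^{+}$; hence $\text{card}(G(a))\le\lambda=\text{card}(\text{tc}(\{a\}))$. Since this holds for every input $a$ with $\text{card}(\text{tc}(a))$ exceeding the fixed threshold $\kappa_{0}+|\rho|$, $G$ does not raise cardinals, contradicting our assumption.

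The main obstacle will be the bookkeeping at the interface between OTM-steps and oracle calls: I must verify carefully that posting the oracle response $F(s_{i})$ onto the miracle tape does not secretly introduce ordinals at or above $\lambda^{+}$, which is exactly where the assumption that $F$ does not raise cardinals is consumed. Finiteness of the number of oracle calls is essential here, since an $\omega$-sequence of near-maximal responses could push the supremum of ordinals on the tape all the way up to $\lambda^{+}$ and break the invariant, whereas a finite supremum of ordinals below $\lambda^{+}$ remains strictly below $\lambda^{+}$.
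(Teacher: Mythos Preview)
Your approach is correct and matches the paper's: both argue, via the standard cardinality bound on halting OTM-computations, that if $F$ does not raise cardinals then the output of $P^{F}(a,\rho)$ has cardinality at most $\text{card}(\text{tc}(\{a\}))$, handling the finitely many oracle calls by induction over the intervening pure segments. Your preliminary extraction of a \emph{uniform} bound on $F$ (using replacement on the set-sized family $H_{\kappa_{0}^{+}}$ of inputs with small transitive closure) is in fact more careful than the paper, which treats a single call explicitly and defers the rest to induction without isolating this point. One small item to tighten when you write it out: the invariant ``all marked cell-indices lie below $\lambda^{+}$'' is not by itself strong enough to invoke the halting bound for the next segment, since a cofinal subset of $\lambda^{+}$ would only yield a $\lambda^{++}$-bound; carry along ``at most $\lambda$ cells are marked'' as well, which is precisely what you are already using when you conclude that $s_{i}$ is coded by a set of cardinality at most $\lambda$.
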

\begin{proof}
    We show that a computation with a single application of an extra (class) function that does not raise cardinalities; the extension to finitely many applications then follows inductively. Concerning one application, let $F$ and $G$ be as in the assumptions of the lemma, and let $(P,\rho)$ be a parameter-program. By assumption, pick a cardinal $\kappa>\rho$ such that, for some set $a$, we have $\lambda:=\text{card}(\text{tc}(a))>\kappa$ and $\text{card}(G(a))>\text{card}(\text{tc}(a))$. Consider the computation of $P^{F}(c(a),\rho)$, where $c(a)$ is a code for $a$. It is well-known (cf., e.g., the proof of Lemma 8.6.3 in \cite{CarlBuch}) that, without the extra function, this computation will, if it halts, have a length of cardinality $\leq\lambda$ ($\ast$). Since we could modify $P$ slightly to halt when $P$ makes a call to $F$, we can conclude that the length of the computation up to the first call to $F$ has cardinality $\leq\lambda$. Thus, at most $\lambda$ many cells have been written on at this time, and so the set $b$ to which $F$ is applied has a transitive closure of cardinality $\leq\lambda$. If $F$ would not raise cardinals, the same would be true of $F(b)$. Another application of ($\ast$) yields that the OTM-computation in the parameter $\rho$ and the input $F(b)$ (which, by assumption, makes no further calls to $F$) will also make $\leq\lambda$ many steps and thus have an output whose transitive closure has cardinality at most $\lambda$, contradicting the assumption that $\text{card}(G(a))>\text{card}(\text{tc}(a))$. 
\end{proof}

\begin{lemma}{\label{powercard}}
\begin{enumerate}
    \item PowerCard$\leq_{\text{OTM}}^{1,1}$NextCard is independent of ZFC.
    \item GreaterCard$\leq_{\text{oW}}^{1,1}$PowerCard
    \item PowerCard$\leq_{OTM}^{1,1}$Pot
    \item PowerCard$\leq_{\text{OTM}}^{1,1}\Sigma_{2}$-Sep.
    \item PowerCard$\nleq_{\text{oW}}^{1,1}\Sigma_n-$Sep, for all $n\in\omega$.
\end{enumerate}
\end{lemma}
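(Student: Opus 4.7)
The easy parts are (2) and (3). For (2), given a $1$-effectivizer $F$ for PowerCard and an input $\alpha$, the value $F(\alpha)=\text{card}(\mathfrak{P}(\alpha))$ is by Cantor's theorem already a cardinal strictly above $\alpha$ whenever $\alpha\geq\omega$; for finite $\alpha$ the reduction just outputs $\omega$ with no call to $F$, so at most a single call ever suffices. For (3), I would apply a Pot-effectivizer to $\alpha$ to obtain $A:=\mathfrak{P}(\alpha)$, then apply it a second time to $A\times A$ to get $\mathfrak{P}(A\times A)$, which contains every binary relation on $A$ and hence every well-ordering. Standard OTM-techniques for testing well-orderedness and computing order types then let me return the minimum of these order types, which is $\text{card}(A)$ and hence the correct output of a $1$-effectivizer for PowerCard.

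For (4), I plan to express ``$\kappa$ is in bijection with $\mathfrak{P}(\alpha)$'' as a $\Sigma_2$-formula $\phi(\kappa,\alpha)$. Unfolding this in the form $\exists f\,\psi(f,\kappa,\alpha)$ where $\psi$ states that $f$ is a function from $\kappa$ with $f(\beta)\subseteq\alpha$ for all $\beta<\kappa$, is injective, and is surjective onto $\mathfrak{P}(\alpha)$, the first three conjuncts use only bounded quantifiers while the surjectivity clause $\forall y(y\subseteq\alpha\to\exists\beta<\kappa\;f(\beta)=y)$ is $\Pi_1$, so the matrix is $\Pi_1$ and the whole formula is $\Sigma_2$. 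By Lemma \ref{truth and sep}, a $\Sigma_2$-Sep effectivizer lets me decide $\phi(\kappa,\alpha)$ uniformly and return the least witness $\kappa$. For (5), I would argue via Lemma \ref{oW and cardinality}: the $1$-effectivizer for PowerCard raises cardinals because, for any cardinal $\kappa$, the choice $x:=\kappa^+$ satisfies $\text{card}(\text{tc}(x))=\kappa^+>\kappa$ and $\text{card}(F(x))=|\mathfrak{P}(\kappa^+)|>\kappa^+=\text{card}(\text{tc}(\{x\}))$, whereas any $\Sigma_n$-Sep effectivizer applied to an instance $(\lceil\phi\rceil,X,\vec p)$ returns a subset of $X$ and so has transitive closure contained in that of $X$ and does not raise cardinals. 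One only needs to observe that the proof of Lemma \ref{oW and cardinality} adapts verbatim from oracles $V\to\text{On}$ to oracles $V\to V$ whose output has transitive closure bounded in cardinality by that of the input, since only this boundedness property is ever used.

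Finally, for the independence in (1), GCH gives the positive side: with a NextCard-effectivizer $F$ and ordinal $\alpha$, I would iterate $F$ to build the increasing sequence of cardinals until the values surpass $\alpha$ so as to compute $|\alpha|$, and then output $F(|\alpha|)=|\alpha|^+=|\mathfrak{P}(\alpha)|$. For the negation I intend to mimic Proposition \ref{card and hp}(3) via an Easton extension $M$ of $L$ preserving $L$-cardinals in which, for some $L$-Cohen-generic $g\subseteq\omega$, one has $2^{\aleph_{2k}^M}=\aleph_{2k+1}^M$ iff $k\in g$ and $2^{\aleph_{2k}^M}=\aleph_{2k+2}^M$ otherwise. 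Since $L$-cardinals are preserved, the class function $\alpha\mapsto\alpha^+$ computed in $L$ remains an effectivizer of NextCard in $M$ and is $L$-definable; any OTM-reduction would then yield an $L$-definable PowerCard-effectivizer in $M$, which, evaluated on the sequence $(\aleph_{2k}^M:k\in\omega)$, would decode $g$ in $L$ and contradict $g\notin L$. Setting up this Easton construction precisely is the hard part; the rest of the proof reduces to the cardinality witness for (5), the complexity analysis for (4), and the two direct constructions for (2) and (3).
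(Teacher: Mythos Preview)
Your proposal is correct and follows essentially the same approach as the paper for each part: the trivial reduction in (2), the two-step use of Pot in (3) (the paper phrases this as ``apply Pot, then invoke Card$\leq_{\text{oW}}^{1,1}$Pot''), the search through ordinals using $\Sigma_2$-truth evaluation in (4), the cardinality-method argument in (5), and the GCH/Easton-coding dichotomy in (1). The only differences are cosmetic: your Easton coding in (1) is a harmless variant of the paper's, and in (5) you invoke Lemma~\ref{oW and cardinality} with the adaptation to $V\to V$ oracles, whereas the paper simply re-runs that cardinality argument directly.
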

\begin{proof}
\begin{enumerate}
    \item    Clearly, in $L$ (or any other model of GCH), we have PowerCard$\equiv_{\text{oW}}$NextCard, as the continuum function is identical with the cardinal successor function.
    
    The other part is proved similarly to Proposition \ref{card and hp}(2), this time by coding too much information into the continuum function: Let $g\subseteq\omega$ again be Cohen-generic over $L$, and let $M$ be a generic extension of $L$ that has the same cardinals as $L$, but satisfies $$2^{\aleph_{k}}=\begin{cases}\aleph_{2k}\text{, if }k\in g\\\aleph_{2k+1}\text{, otherwise}\end{cases}$$ for all $k\in\omega$. Such an extension exists by \cite{Kunen}, p. 212-213. Suppose that $(P,\rho)$ reduces PowerCard to NextCard in $M$. Let $F$ be the class function mapping each ordinal $\alpha$ to $(\alpha^{+})^{L}$. Then $F$ is an effectivizer of NextCard in $M$. Moreover, $F$ ist definable in $L$. Hence, the class function $G$ computed by $P^{F}(\rho)$ is also definable in $L$. But from $G$, we can define $g$, so $g\in L$, again a contradiction. 
    \item Trivial, as the cardinality of the power set of an ordinal $\alpha$ is in particular a cardinal larger than $\alpha$. 

    \item Let an ordinal $\alpha$ be given, and let $F$ be an effectivizer for Pot. The computation starts by obtaining $F(\alpha)$. Now, by Proposition \ref{card and hp}(4), the cardinality of $F(\alpha)$ can be obtained relative to $F$. 
    \item Let an ordinal $\alpha$ be given. The computation runs through the ordinals, and, for every ordinal $\beta$, applies the effectivizer for $\Sigma_{2}$-separation to the set $\{\beta\}$ and the formula $\phi(x):\Leftrightarrow\exists{f}\exists{y}\forall{z}((z\in y\leftrightarrow z\subseteq x)\wedge f:\beta\overset{1:1}{\rightarrow}x)$ (which expresses ``$\beta$ is the cardinality of the power set of $x$''). If the result is empty, we continue with the next $\beta$. Otherwise, we halt and output $\beta$. 
    \item Immediate from Lemma \ref{oW and cardinality} and the fact that separation does not the increase the cardinality of the transitive closure of a set.
\end{enumerate}
\end{proof}

\subsection{NextCard and PowerCard}

By a somewhat more involved version of this argument, we can also show that NextCard does not ZFC-provably reduce to PowerCard.

We recall a particular (standard) forcing notions for collapsing a cardinal.

\begin{defini}
For $\alpha\in\text{On}$, $\alpha>0$, we denote by $\mathbb{P}_{\alpha}$ the Levy collapse for $\aleph_{\omega\alpha+1}$ and $\aleph_{\omega\alpha+2}$, consisting of partial functions from $\aleph_{\omega\alpha+1}$ to $\aleph_{\omega\alpha+2}$ of cardinality $<\aleph_{\omega\alpha+1}$.
\end{defini}

The necessary (standard) prerequisties from forcing for our construction are summarized in the following lemma.\footnote{We thank  Philipp Schlicht for helpful discussions on the (iterated) forcing constructions used below.} 


\begin{lemma}{\label{one iteration step}}
Let $\alpha>0$ be an ordinal, and let $\kappa=\aleph_{\omega\alpha}$. Let $M$ be a model of ZFC such that $2^{\kappa^{+}}=2^{\kappa^{++}}=\kappa^{+++}$ for all limit cardinals $\kappa$, while $2^{\kappa}=\kappa^{+}$ for all cardinals that are not successors of limit cardinals. 
Let $G$ be generic for $\mathbb{P}_{\alpha}$ over $M$. Then $M[G]$ has the following properties:
\begin{enumerate}
\item $\text{card}^{M[G]}(\kappa^{+})=\text{card}^{M[G]}(\kappa^{++})$ (i.e., $\kappa^{++}$ is collapsed). 
\item For every ordinal $\alpha\notin[\kappa^{++},\kappa^{+++})$, $\text{card}^{M}(\alpha)=\text{card}^{M[G]}(\alpha)$ (i.e., all other cardinals in $M$ are preserved). 
\item For every ordinal $\alpha$, we have $[\text{card}(\mathfrak{P}(\alpha))]^{M}=[\text{card}(\mathfrak{P}(\alpha))]^{M[G]}$ (i.e., all cardinalities of power sets are preserved).
\end{enumerate}
\end{lemma}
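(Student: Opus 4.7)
The plan is to exploit two standard properties of the Levy collapse together with the cardinal arithmetic granted by the hypothesis. First, $\mathbb{P}_\alpha$ is $\kappa^+$-closed, since the union of a descending sequence of conditions of length $<\kappa^+$ is still a partial function from $\kappa^+$ to $\kappa^{++}$ of size $<\kappa^+$ by regularity of $\kappa^+$. Second, $|\mathbb{P}_\alpha|=(\kappa^{++})^{<\kappa^+}=\kappa^{++}$: because $\text{cf}(\kappa^{++})=\kappa^{++}>\kappa$ and $2^{\kappa}=\kappa^+<\kappa^{++}$, standard cardinal arithmetic yields $(\kappa^{++})^{\lambda}=\kappa^{++}$ for every $\lambda\leq\kappa$. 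Consequently $\mathbb{P}_\alpha$ has the $\kappa^{+++}$-chain condition.

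Claims (1) and (2) then follow by standard forcing arguments. For (1), a density argument shows that $\bigcup G$ is a surjection from $\kappa^+$ onto $\kappa^{++}$, so these ordinals have the same cardinality in $M[G]$. For (2), $\kappa^+$-closure preserves all cardinals $\leq\kappa^+$ (no new bounded subsets of $\kappa^+$ being added), while the $\kappa^{+++}$-cc preserves all cardinals $\geq\kappa^{+++}$. Since no $M$-cardinals lie strictly between $\kappa^{++}$ and $\kappa^{+++}$, only $\kappa^{++}$ is collapsed.

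For (3), I would split by cases on the ordinal in question, which I denote by $\beta$ to avoid conflict with the forcing index $\alpha$. For $\beta\leq\kappa$, $\kappa^+$-closure gives $\mathfrak{P}(\beta)^M=\mathfrak{P}(\beta)^{M[G]}$, and its $M$-cardinality $2^\beta\leq\kappa^+$ is preserved. For $\beta=\kappa^+$, a nice-name count produces at most $2^{|\mathbb{P}_\alpha|}=2^{\kappa^{++}}=\kappa^{+++}$ antichains in $\mathbb{P}_\alpha$, and therefore at most $(\kappa^{+++})^{\kappa^+}=2^{\kappa^{++}\cdot\kappa^+}=\kappa^{+++}$ nice names for subsets of $\kappa^+$; the reverse inequality follows from $\mathfrak{P}(\kappa^+)^M\subseteq\mathfrak{P}(\kappa^+)^{M[G]}$ together with preservation of the cardinal $\kappa^{+++}$. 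The case $\beta=\kappa^{++}$ reduces to the previous one via the generic bijection from~(1). For $\beta\geq\kappa^{+++}$, an analogous nice-name count yields $|\mathfrak{P}(\beta)|^{M[G]}\leq(\kappa^{+++})^\beta=2^\beta$, matching the $M$-value, which lies in the region of preserved cardinals.

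The main obstacle is the case $\beta=\kappa^+$: both the nice-name upper bound and the lower bound coming from $\mathfrak{P}(\kappa^+)^M$ pivot on the arithmetic identity $2^{\kappa^+}=2^{\kappa^{++}}=\kappa^{+++}$ supplied by the hypothesis, so the hypothesis must be used on both sides. One should also keep in mind that while the ordinal $\kappa^{+++}$ remains a cardinal in $M[G]$, its $\aleph$-index shifts from $\aleph_{\omega\alpha+3}^M$ to $\aleph_{\omega\alpha+2}^{M[G]}$ because of the collapse in~(1); since statement (3) speaks of the cardinality as an ordinal, this relabelling is harmless. The remaining verifications are routine applications of standard forcing machinery.
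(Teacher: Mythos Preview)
Your proposal is correct and follows essentially the same approach as the paper: establish $\kappa^{+}$-closure and $|\mathbb{P}_\alpha|=\kappa^{++}$ (hence the $\kappa^{+++}$-cc), invoke the standard collapse lemma for (1) and (2), use closure for small $\beta$, and bound $|\mathfrak{P}(\beta)|^{M[G]}$ by the nice-name count $(\kappa^{+++})^{|\beta|}$ for large $\beta$. The only organizational difference is that you handle $\beta=\kappa^{++}$ via the generic bijection from (1), whereas the paper folds it into the nice-name count directly (since $(\kappa^{+++})^{\kappa^{++}}=\kappa^{+++}$); both are fine.
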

\begin{proof}

The forcing we consider is just the standard forcing for collapsing a cardinal, see, e.g., \cite{Jech}, p. 237f. Since $\kappa^{++}$ is regular, we have $(\kappa^{++})^{<\kappa^{+}}=(\kappa^{++})^{\kappa}=\kappa^{++}$, so this forcing collapses $\kappa^{++}$ to $\kappa^{+}$ while preserving all other cardinals by \cite{Jech}, Lemma 15.21.

As $\mathbb{P}_{\alpha}$ is $\kappa^{+}$-closed, it follows from \cite{Kunen}, Theorem 6.14, that $\mathfrak{P}^{M}(\beta)=\mathfrak{P}^{M[G]}(\beta)$ for every $\beta<\kappa^{+}$. Thus, (3) holds for all ordinal $<\kappa^{+}$.\footnote{Note that there is no need to indicate in which model the successor is taken, as the successor of $\kappa$ in $M$ and in $M[G]$ is the same.} 

So let $\beta\geq\kappa^{+}$, and let $\dot{\beta}$ be the canonical name for $\beta$. We will now count how many nice names for subsets of $\dot{\beta}$ (see \cite{Kunen}, Definition 5.11 and Lemma 5.12) there can be in $M$. First, $\mathbb{P}_{\alpha}$ has cardinality $(\kappa^{++})^{\kappa}=\kappa^{++}$ in $M$, and thus satisfies the $\kappa^{+++}$-cc. 
Consequently (using the assumptions about $M$), there are at most $(\kappa^{++})^{\kappa^{++}} = 2^{\kappa^{++}} = \kappa^{+++}$ many antichains of $\mathbb{P}_{\alpha}$ in $M$. 
It follows that, in $M$, the cardinality of the set of nice names for subsets of $\beta$ is bounded by $(\kappa^{+++})^{\text{card}^{M}(\beta)}$. 
If $\text{card}^{M}(\beta)\leq(\kappa^{++})^{M}$, this will be $\kappa^{+++}$, which, in $M$, agrees with $2^{\kappa^{+}}$ and $2^{\kappa^{++}}$ by assumption. 
If $\beta>\kappa^{++}$, then $(\kappa^{+++})^{\text{card}^{M}(\beta)}=2^{\text{card}^{M}(\beta)}$, which again agrees with $\mathfrak{P}^{M}(\beta)$. 
Thus, we have (3).

\end{proof}


\begin{lemma}{\label{nextcard powercard}}
     NextCard$\leq_{\text{OTM}}$PowerCard is independent of ZFC 
\end{lemma}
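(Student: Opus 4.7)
The plan is to establish independence by proving reducibility in $L$ and non-reducibility in a forcing extension. For the easy direction, work in $L$, where GCH holds. Then any effectivizer $F$ for PowerCard satisfies $F((\alpha)) = |\alpha|^+$ for every infinite $\alpha$, so after trivial adjustment at finite inputs, $F$ is itself a GreaterCard-effectivizer. By Proposition \ref{card and hp}(3), NextCard $\leq_{\text{OTM}}$ GreaterCard holds in $L$; composing the reductions gives NextCard $\leq_{\text{OTM}}$ PowerCard in $L$.

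For the hard direction, we construct a model $M$ by coding a Cohen-generic real into the cardinal successor function while preserving all power-set cardinalities. First force over $L$ to reach a ground model $M_0 = L[E]$ whose cardinal arithmetic satisfies the hypothesis of Lemma \ref{one iteration step}: this is obtained by a standard Easton construction, so $M_0$-cardinal arithmetic is $L[E]$-definable. Next, add a Cohen-generic real $g \subseteq \omega$ over $M_0$, and over $M_0[g]$ perform the Easton-product forcing $\mathbb{Q} = \prod_{n \in g} \mathbb{P}_n$, where $\mathbb{P}_n$ is the Levy collapse of Lemma \ref{one iteration step} at parameter $n$. Let $M$ be the resulting extension. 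By iterating Lemma \ref{one iteration step}, we arrange that in $M$: (i) for every ordinal $\alpha$, $|\mathfrak{P}(\alpha)|^M = |\mathfrak{P}(\alpha)|^{M_0}$; and (ii) for each $n \in \omega$, the $M$-cardinal successor of $\aleph_{\omega n+1}^{M_0}$ is $\aleph_{\omega n+2}^{M_0}$ if $n \notin g$, and $\aleph_{\omega n+3}^{M_0}$ otherwise.

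Now suppose for contradiction that in $M$ some parameter-program $(P, \rho)$ reduces NextCard to PowerCard. Consider the class function $F \colon \alpha \mapsto |\mathfrak{P}(\alpha)|^{M_0}$: this is $L[E]$-definable, and by (i) it is an effectivizer for PowerCard in $M$. Then the NextCard-effectivizer $H$ computed by $P^F(\rho)$ is also $L[E]$-definable; but by (ii), we can read off $g$ via $n \in g \iff H((\aleph_{\omega n+1}^{M_0})) = \aleph_{\omega n+3}^{M_0}$, yielding $g \in L[E] = M_0$, contradicting the Cohen-genericity of $g$ over $M_0$.

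The main technical obstacle is the iterated forcing analysis: one must verify that the Easton product of the $\mathbb{P}_n$'s simultaneously preserves all $M_0$-continuum-function values. This amounts to iterating Lemma \ref{one iteration step} and checking that the closure and chain-condition estimates survive the product; since different factors act on widely separated cardinal levels and each $\mathbb{P}_n$ is $\aleph_{\omega n+1}$-closed of cardinality $\aleph_{\omega n+2}$, standard Easton-product techniques should apply, but this needs to be spelled out with care. The remaining steps closely parallel the negative directions in Proposition \ref{card and hp}(3) and Lemma \ref{powercard}(1).
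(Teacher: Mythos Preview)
Your argument is correct, but it follows a genuinely different strategy from the paper's own proof. For the consistency direction both you and the paper simply appeal to GCH. For the non-reducibility direction, you follow the template of Proposition~\ref{card and hp}(3) and Lemma~\ref{powercard}(1): encode a Cohen real $g$ into the cardinal-successor function at the levels $\aleph_{\omega n+1}$ while keeping all values of $\alpha\mapsto|\mathfrak{P}(\alpha)|$ fixed, and then argue that any reduction applied to the ground-model-definable PowerCard effectivizer would make $g$ definable in $M_{0}=L[E]$, a contradiction. The paper, by contrast, does a direct diagonalisation: it fixes one PowerCard effectivizer $F$ in a suitable ground model $M$, and for each pair $(P_{k},\rho)$ checks whether $P_{k}^{F}$ outputs the correct successor of $\aleph_{\omega(\omega\rho+k)+1}^{M}$; exactly at those indices it collapses $\aleph_{\omega(\omega\rho+k)+2}^{M}$ via the same Levy collapse of Lemma~\ref{one iteration step}, so that in the Easton-product extension $F$ remains a PowerCard effectivizer (continuum values are preserved), the computation $P_{k}^{F}$ is unchanged by absoluteness, but its output is no longer the $M[G]$-successor.

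Both arguments ultimately rest on Lemma~\ref{one iteration step} and an Easton product of such collapses; the paper cites Reitz for tameness of the product just as you would need to. Your approach has the virtue of being uniform with the other independence proofs in the paper and avoids having to reason separately about programs that already fail in the ground model; the paper's diagonalisation avoids the preliminary Easton extension to $L[E]$ and the extra Cohen step, and does not need to argue that Cohen forcing preserves the required cardinal-arithmetic hypotheses of Lemma~\ref{one iteration step}. The technical obstacle you flag---that the product simultaneously preserves all continuum values---is exactly the same one the paper faces and handles in the same way.
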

\begin{proof}
In $L$ (and any other model of GCH), NextCard and PowerCard coincide, so either reduces to the other.  

To see the consistency of NextCard$\nleq_{\text{OTM}}$PowerCard, use an iterated forcing to sabotage all candidates $(P,\rho)$ for a reduction: 
To prevent $(P,\rho)$ from witnessing the reduction, we change some cardinal while preserving all values of the continuum function. 

Let $M$ be a model of ZFC satisfying the condition for $M$ in Lemma \ref{one iteration step}; that is, $M$ satisfies GCH everywhere except at successor cardinals $\lambda^{+}$ of limit cardinals, for which we have $2^{\lambda^{+}}=\lambda^{+++}$.  Let $F$ be an effectivizer for PowerCard in $M$. 



The idea is the following: Consider parameter-program $(P_{k},\rho)$. Let $\kappa=\aleph_{\omega(\omega\rho+k)}$. Then, if $P_{k}^{F}((\kappa^{+})^{M},\rho)$ does not halt with output $(\kappa^{++})^{M}$ (which includes the case that it does not halt at all), we do not change $M$. But if it does, we use $\mathbb{P}_{\alpha}$ to collapse $\kappa^{++}$ to $\kappa^{+}$ without changing any values of the continuum function. In the generic extension $M[G]$, $F$ will thus still be an effectivizer for PowerCard, and the computation $P_{k}^{F}((\kappa^{+})^{M},\rho)$ will work exactly the same in $M[G]$ and thus still halt with output $(\kappa^{++})^{M}$, which, however, is not the cardinal successor of $\kappa^{+}$ in $M[G]$. 

Formally, let $A:=\{\omega\rho+k:P_{k}^{F}(\aleph_{\omega(\omega\rho+k)+1}^{M},\rho)\downarrow = \aleph_{\omega(\omega\rho+k)}^{M}\}$, and let $\mathbb{Q}$ be the Easton product forcing consisting of all forcings $\mathbb{P}_{\alpha}$ with $\alpha\in A$. This product is progressively closed by \cite{Reitz}, Lemma 117, and thus tame by \cite{Reitz}, Theorem 98.\footnote{Reitz shows this for BGC, every model of ZFC can be extended to one of BGC (\cite{Reitz}, pp. 60f), and BGC is conservative over ZFC (ebd.), so this guarantees that ZFC holds in the generic extension if it holds in the ground model.} 

Thus, if $G$ is $\mathbb{Q}$-generic, then $M[G]\models\text{ZFC}$, $F$ is still an effectivizer for PowerCard in $M[G]$, so all OTM-computations using $F$ are still computations relative to PowerCard in $M[G]$, but, by definition of $M[G]$, there is no pair $(P_{k},\rho)$ such that $P_{k}^{F}$ computes the cardinal successor of $\aleph_{\omega(\omega\rho+k)+1}^{M[G]}$ in $M[G]$ correctly. Thus, NextCard is not reducible to PowerCard in $M[G]$. 

\end{proof}

\begin{remark}
At this point, one might ask whether the assumption that $\text{NextCard}\leq_{\text{OTM}}^{(1,1)}\text{PowerCard}$ has any implications for the continuum function. The proof of Proposition \ref{card and hp}(3), which shows that we have $\text{NextCard}\leq_{\text{OTM}}^{1,1}\text{GreaterCard}$ and thus $\text{NextCard}\leq_{\text{OTM}}^{1,1}\text{PowerCard}$ in $L$ actually only requires 
that Card$=$Card$^{L}$, i.e., that the universe has the same cardinals as $L$. By using Easton-Forcing over $L$, we can thus generate models of ZFC in which the continuum function behaves in any possible way on the regular cardinals, while preserving the reduction of NextCard to PowerCard. 

In particular, the implication from GCH to the reduction cannot be reversed; in fact, assuming that the reduction is possible tells us nothing about the powers of regular cardinals. 
\end{remark}




\subsection{OTM-Reducibility and HOD}

We now consider reductions to separation operators. For this purpose, it is helpful to establish connections between OTM-computability relative to separation operators and HOD, which are also of independent interest. For a set $x$, we denote by $\text{HOD}(x)$ the class of sets that are heriditarily ordinal definable from $x$, see, e.g., \cite{Jech}, pp. 194f.

\begin{lemma}{\label{sep and HOD}}
Let $n\in\omega$, and let $x$, $y$ be sets. Suppose that $P$ is an OTM-program such that, for every effectivizer $F$ of $\Sigma_{n}$-Sep and every code $c_{x}$ for $x$, $P^{F}(c_{x})$ computes a code for $y$. Then $y\in\text{HOD}(x)$.
\end{lemma}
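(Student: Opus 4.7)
The plan is to construct a concrete, ordinal-definable effectivizer $\hat F_0$ for $\Sigma_n$-Sep and to arrange that $P^{\hat F_0}$, applied to a canonically chosen code for $x$, produces an output code $c_y$ that is itself ordinal-definable from $x$ and the elements of $\text{tc}(\{x\})$. Since such a $c_y$ carries, via its Mostowski collapse, a definable naming of every member of $\text{tc}(\{y\})$, this places all of $\text{tc}(\{y\})$ into $\text{OD}(\text{tc}(\{x\})\cup\{x\})$, i.e.\ $y\in\text{HOD}(x)$.

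For $\hat F_0$, I would take the canonical separation operator at the set level, $F_0((\lceil\phi\rceil,X,\vec p)):=\{z\in X:\phi(z,\vec p)\}$, which is a parameter-free definable class function using the $\Sigma_n$-truth predicate supplied either by L\'evy's theorem (for fixed $n$) or by the $V_\kappa\prec V$ reflection of the ambient Feferman framework. Encode $F_0$ so that, on input code $c$ decoding to $(\lceil\phi\rceil,X,\vec p)$, the output $\hat F_0(c)$ is obtained by restricting the well-founded extensional order $c$ induces on $\text{tc}(\{(\lceil\phi\rceil,X,\vec p)\})$ to $\text{tc}(\{Y\})\subseteq\text{tc}(\{X\})\cup\{Y\}$ and designating a fresh top name for $Y$; this encoding is definable without set parameters, so $\hat F_0\in\text{OD}$.

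Next I would pick the code $c_x^0$ for $x$ by working inside the inner model $\text{HOD}(\text{tc}(\{x\}))$, which possesses a canonical definable well-ordering, yielding a specific $c_x^0$ that is definable from $x$ together with elements of $\text{tc}(\{x\})$. By hypothesis $P^{\hat F_0}(c_x^0,\rho)\downarrow=c_y$ with $c_y$ coding $y$; determinism of OTM-dynamics then makes $c_y$ a definable class-function value of $(P,\rho,\hat F_0,c_x^0)$, and hence $c_y\in\text{OD}(\text{tc}(\{x\})\cup\{x\})$. Every $z\in\text{tc}(\{y\})$ is recovered as the Mostowski image of a specific ordinal name inside $c_y$, so $z\in\text{OD}(\text{tc}(\{x\})\cup\{x\})$ as well, proving $y\in\text{HOD}(x)$. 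The main obstacle is obtaining $c_x^0$ in a definable manner, since ZFC alone does not yield a global well-ordering of $V$; retreating to $\text{HOD}(\text{tc}(\{x\}))$ circumvents this, after which the upgrade from $y\in\text{OD}$ to $y\in\text{HOD}$ follows automatically from the fact that codes carry their full transitive-closure structure with them.
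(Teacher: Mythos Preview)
Your proposal is correct and follows essentially the same approach as the paper: both select a canonical, parameter-free definable encoded effectivizer $\hat F_0$ for $\Sigma_n$-Sep (with the output code obtained by restricting the input code) and then argue that the resulting computation, and hence its output, lies in $\text{HOD}$ (respectively $\text{HOD}(x)$). The only differences are cosmetic: the paper runs an explicit transfinite induction on the length of the partial computation to place each stage in $\text{HOD}$, whereas you observe directly that the halting output $c_y$ is a set of ordinals definable from ordinal-definable data and then recover each member of $\text{tc}(\{y\})$ from $c_y$ via the Mostowski collapse; and you treat the relativized case by picking $c_x^0$ inside $\text{HOD}(\text{tc}(\{x\}))$, while the paper only writes out the case $x=\emptyset$.
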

\begin{proof}
For the sake of simplicity, we only consider the unrelativized case where $x=\emptyset$. We first show that $\Sigma_{n}$-Sep has a definable encoded effectivizer, for every $n\in\omega$. 
Assume that the conditions of the lemma are satisfied. We pick a particularly simple encoded effectivizer $F$, where, given codes for a set $X$ and a parameter $\vec{p}$, the set $\bar{X}:=\{x\in X:\phi(x,\vec{p})\}$ is encoded by taking the elements of $\text{tc}(\bar{X})$ in the same order in which they appear in the encoding of $X$. 
    We show by transfinite induction that, for any OTM-program $P$, any (coded) input set $x$ and any ordinal $\alpha$, the partial computation of $P^{F}(x)$ of length $\alpha$ is contained in HOD.     
    First, note that all such computations are clearly ordinal definable in the parameter $\alpha$ and thus elements of OD. 
    It thus suffices to see that the \textit{elements} of such a computation sequence belong to HOD. Any such element is an ordered pair $(\xi,(t,s,\rho))$, where $\xi$ is an ordinal less than or equal to $\alpha$ (indicating the point in time), $t$ is a map (or, if several tapes are used, a finite tuple of such) from some ordinal to $\{0,1\}$ (indicating the tape content at time $\xi$), $s$ is a natural number (indicating the inner state at time $\xi$) and $\rho$ is an ordinal (indicating the head position at time $\xi$). The only component that is not obviously an element of HOD is $t$. 
    If $\alpha$ is a limit ordinal, the claim is trivial, since, inductively, all elements of the computation sequence up to $\alpha$ will belong to HOD, as does the sequence itself, and, since HOD is a model of ZFC, hence so does the tape content obtained from $(t_{\iota}:\iota<\alpha)$ by the liminf-rule. If $\alpha=\beta+1$ is a successor ordinal, then, by induction, the computation sequence $((\iota,(t_{\iota},s_{\iota},\rho_{\iota})):\iota\leq\beta)$ belongs to HOD, and hence so does $t_{\beta}$. If the command executed at time $\beta$ is not the execution of the $F$-operator, then it is again trivial that $t_{\alpha}=t_{\beta+1}\in\text{HOD}$. This leaves us with the case that, at time $\beta$, $F(X,\lceil\phi\rceil,\vec{p})$ is executed for some set $X$, some formula $\phi$ and some parameter set $\vec{p}$. Since $X$ and $\vec{p}$ must then have codes $c_{X}$ and $c_{\vec{p}}$ written on the tapes at time $\beta$, these codes, and hence $X$ and $\vec{p}$ themselves, belong to HOD. It follows that $\bar{X}:=\{x\in X:\phi(x,\vec{p})\}\in\text{HOD}$. Now, by our choice of $F$, $F(X,\lceil\phi\rceil,\vec{p})$ is definable (without further parameters) from $c_{X}$ and $\bar{X}$. Thus $F(X,\lceil\phi\rceil,\vec{p})\in\text{HOD}$, completing the induction.
    
\end{proof}

Concerning full separation, we obtain the following:

\begin{theorem}{\label{full sep and hod}}
    (Fef) The class $S$ of sets that are OTM-computable relative to every effectivizer of Sep is HOD.
\end{theorem}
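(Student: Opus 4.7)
The plan is to prove the two inclusions $S\subseteq\text{HOD}$ and $\text{HOD}\subseteq S$ separately. For the first, I extend the argument of Lemma \ref{sep and HOD} from $\Sigma_{n}$-Sep to full Sep by constructing a single definable encoded effectivizer $F^{\ast}$ for Sep using the Feferman class $C$ of cardinals $\kappa$ with $V_{\kappa}\prec V$. Concretely, given a code $c_{X}$ for a set $X$, a G\"odel number $\lceil\phi\rceil$ of an $\in$-formula and a parameter tuple $\vec{p}$, let $\kappa$ be the least element of $C$ with $X,\vec{p}\in V_{\kappa}$ and output the natural encoding (in the ordering inherited from $c_{X}$) of $\{x\in X:V_{\kappa}\models\phi(x,\vec{p})\}$. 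By elementarity this coincides with $\{x\in X:\phi(x,\vec{p})\}$, so $F^{\ast}$ is genuinely an effectivizer of Sep; moreover, the whole recipe is definable (without parameters) in $V$, and hence in HOD. The transfinite induction on computation length in Lemma \ref{sep and HOD} then applies verbatim, showing that every partial computation of $P^{F^{\ast}}(c_{x},\rho)$ lies inside HOD. Since $y\in S$ means $y$ is computed from \emph{every} effectivizer of Sep, in particular from $F^{\ast}$, we obtain $y\in\text{HOD}$.

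For the reverse inclusion $\text{HOD}\subseteq S$, fix $y\in\text{HOD}$ together with a formula $\chi(x,\vec{\alpha})$ with ordinal parameters $\vec{\alpha}$ uniquely defining $y$. Since $\text{tc}(\{y\})\subseteq\text{HOD}$ is a set, a choice of defining pair for each of its elements uses only ordinals bounded by some $\lambda$; by Feferman pick $\kappa\in C$ with $\kappa>\lambda$ and $\text{tc}(\{y\})\subseteq V_{\kappa}$, and encode $(\lceil\chi\rceil,\vec{\alpha},\kappa)$ into a single ordinal $\rho$. The program $P$ on any effectivizer $F_{\text{Sep}}$ of Sep then proceeds as follows: (i) decode $\rho$; (ii) invoke Lemma \ref{truth and sep}(2) to convert $F_{\text{Sep}}$ (via a single call) into a truth predicate $T$ for arbitrary $\in$-formulas; (iii) enumerate all pairs $(\lceil\psi\rceil,\vec{\gamma})$ with $\vec{\gamma}$ a finite tuple of ordinals below $\kappa$, and for each use $T$ to check whether $\psi(\cdot,\vec{\gamma})$ defines a unique set $z$ and whether that $z$ lies in $\text{tc}(\{y\})$ (which is expressible as an $\in$-formula using the known parameters $\chi$ and $\vec{\alpha}$); (iv) use $T$ again to determine the $\in$-relation on the sets so collected, thereby assembling a code for $\text{tc}(\{y\})$ and finally for $y$ itself, identified via $\chi(\cdot,\vec{\alpha})$.

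The principal subtlety is verifying that the bounded enumeration in step (iii) is \emph{complete} -- that every $z\in\text{tc}(\{y\})$ really appears as some $\psi(\cdot,\vec{\gamma})$ with $\vec{\gamma}<\kappa$. This is where both $\text{tc}(\{y\})$ being a set and the Feferman submodel $V_{\kappa}\prec V$ are essential: each $z\in\text{tc}(\{y\})$ has an OD definition, and a choice of such definitions ranges over a set of formula-parameter pairs whose parameters are collectively bounded by $\lambda$, while elementarity guarantees that truth in $V_{\kappa}$ agrees with truth in $V$ on all instances the computation encounters. A secondary (routine) concern is the running time, but since $|\kappa^{<\omega}|=\kappa$ and an OTM traverses ordinals of this size in the standard way, and since (ii) costs only a single oracle call, no obstacle arises there. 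The remaining checks -- that $F^{\ast}$ is encoded in the sense of Definition \ref{effectivizer} and that the assembled code in (iv) correctly decodes to $y$ -- are straightforward bookkeeping.
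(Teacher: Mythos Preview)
Your proof is correct, and the first inclusion $S\subseteq\text{HOD}$ is handled essentially as in the paper (the paper simply says ``by the same argument as Lemma~\ref{sep and HOD} within (Fef)''; you spell out the definable effectivizer $F^{\ast}$ explicitly, which is fine).

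For $\text{HOD}\subseteq S$, however, your route is considerably more elaborate than the paper's. The paper exploits a single observation that short-circuits your entire enumeration: since $\text{HOD}\models\text{ZFC}$, any $y\in\text{HOD}$ already has a \emph{code} $c_{y}\subseteq\alpha\in\text{On}$ lying in HOD, and this $c_{y}$ is itself OD, say as the unique $z$ with $\phi(z,\rho)$. Then a \emph{single} application of the Sep-effectivizer to the ordinal $\alpha$ with the formula $\psi(\iota):\Leftrightarrow\exists z(\phi(z,\rho)\wedge\iota\in z)$ returns $c_{y}$ outright. So instead of enumerating OD-definitions of all elements of $\text{tc}(\{y\})$, checking membership and equality via the truth predicate, and reassembling a code, one simply separates $c_{y}$ out of $\alpha$ in one step. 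Your approach does work, but it obscures that one oracle call suffices and that the Feferman hypothesis plays no role in this direction.

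Two minor points: the ``single call'' in Lemma~\ref{truth and sep}(2) is one call \emph{per truth query}, not one call producing a reusable truth-predicate object, so your step~(ii) will in fact issue many calls (harmless, but worth stating correctly). And your insistence that $\kappa\in C$ in the second half is unnecessary: any ordinal bounding the relevant parameters will do, since the truth evaluations go through $F_{\text{Sep}}$ and hence refer to $V$, not $V_{\kappa}$; the elementarity of $V_{\kappa}$ is not used in this direction.
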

\begin{proof}
    By using the same argument as for Lemma \ref{sep and HOD} within (Fef), we see that $S\subseteq\text{HOD}$.

    Now let $x\in\text{HOD}$. Since $\text{HOD}\models\text{ZFC}$, and ZFC proves that every set has a code, HOD will also contain a set $c_{x}\subseteq\alpha\in\text{On}$ which codes $x$, for some $\alpha\in\text{On}$. By definition of HOD, there is an $\in$-formula $\phi$ and some $\rho\in\text{On}$ such that $c_{x}$ is the unique $y$ with $\phi(y,\rho)$. Let $F$ be an effectivizer for Sep. Now, using the parameters $\alpha$ and $\rho$, apply Sep to $\alpha$ with the formula $\psi(\iota):\Leftrightarrow\exists{y}(\phi(y,\rho)\wedge \iota\in y)$. The result will be $c_{x}$. Thus, a code for $x$ is OTM-computable relative to $F$.\footnote{We thank Philipp Schlicht for pointing out this simplification of our original argument (which is now used for Theorem \ref{pot sep hod}) to us.}
\end{proof}

\begin{remark}
    \begin{enumerate}
    \item In fact, the same argument, combined with the well-known fact that every ordinal definable set is already ordinal definable with a $\Sigma_{2}$-definition (see, e.g., \cite{Hamkins:MO}) yields that one can replace Sep with $\Sigma_{2}$-Sep in Theorem \ref{full sep and hod}.
    \item It is somewhat suggestive that, while plain OTM-computability with ordinal parameters yields all sets in $L$ (Koepke, \cite{Koepke}), OTM-computability relative to an effectivizer for separation yields HOD, which can be obtained by replacing first-order logic with second-order logic in the definition of $L$ (see \cite{KMV}). We do not know a direct connection between the two results, nor whether any other of the ``alternative $L$s'' described in \cite{KMV} can be given a computable characterization.
    \end{enumerate}
\end{remark}

We can now characterize precisely the models of ZFC which allow for a reduction of Pot to ($\Sigma_{2}$-)Sep.


\begin{theorem}{\label{pot sep hod}}
For all $n\geq 2$, $V=\text{HOD}$ is equivalent to Pot$\leq_{\text{OTM}}\Sigma_{n}$-Sep.
\end{theorem}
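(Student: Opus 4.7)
The plan is to prove the two implications separately, with the forward direction being the substantive one.

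For the reverse direction, I would invoke the relativized version of Lemma \ref{sep and HOD}: a program witnessing Pot$\leq_{\text{OTM}}^{1,1}\Sigma_{n}$-Sep produces, relative to any $\Sigma_{n}$-Sep effectivizer $F$, a code for $\mathfrak{P}(x)$ from a code for $x$, whence $\mathfrak{P}(x)\in\text{HOD}(x)$ for every set $x$. To upgrade this to $V=\text{HOD}$ I would argue by minimal-rank contradiction: suppose $a\notin\text{HOD}$ has minimal rank. Then every element of $a$ lies in HOD, so $a\subseteq b:=V_{\text{rank}(a)+1}\cap\text{HOD}$, and $b\in\text{HOD}$ since it is HOD-definable from $\text{rank}(a)$. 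Thus $\mathfrak{P}(b)\in\text{HOD}(b)=\text{HOD}$, and since HOD is transitive and $a\in\mathfrak{P}(b)$, we conclude $a\in\text{HOD}$, a contradiction.

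For the forward direction, I would exploit Hamkins' observation that under $V=\text{HOD}$ every set is $\Sigma_{2}$-ordinal-definable, yielding a $\Sigma_{2}$-definable global well-ordering $<_{H}$ of $V$. Given a code $c_{x}$, the program proceeds in three stages. First, using a straightforward adaptation of Lemma \ref{powercard}(4), compute $\kappa=|\mathfrak{P}(x)|$ from $c_{x}$ via finitely many calls to the $\Sigma_{2}$-Sep effectivizer. Second, fix a canonical encoding $\text{can-code}(w)$ by enumerating $\text{tc}(\{w\})$ first by $\in$-rank and then by $<_{H}$ within each rank, obtaining a bijection $\pi_{w}:\text{tc}(\{w\})\to|\text{tc}(\{w\})|$; then let $\text{can-code}(w)$ consist of the pairs $p(\pi_{w}(a),\pi_{w}(b))$ for $a\in b$ in $\text{tc}(\{w\})$, together with a distinguished marker for $w$ itself. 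Third, apply the $\Sigma_{2}$-Sep effectivizer to the ordinal $\kappa+\omega$ with the formula $\psi(\iota,c_{x}):\Leftrightarrow\exists y(y=\mathfrak{P}(x)\wedge\iota\in\text{can-code}(y))$, which returns $\text{can-code}(\mathfrak{P}(x))$; monotonicity then extends the reduction from $n=2$ to all $n\geq 2$.

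The main obstacle is verifying that $\psi$ can indeed be written as a $\Sigma_{2}$ formula. The naive ``$<_{H}$-least code of minimal length'' approach would introduce extra quantifier alternations through its minimality and uniqueness clauses; my construction sidesteps this by building canonicity directly into the local enumeration $\pi_{w}$, leaving $<_{H}$ itself (which is $\Sigma_{2}$ by Hamkins' theorem) as the only source of complexity. A careful bookkeeping of quantifier alternations in ``$\iota\in\text{can-code}(y)$'' together with the $\Pi_{1}$-ness of ``$y=\mathfrak{P}(x)$'' should then confirm that $\psi$ lives at the $\Sigma_{2}$ level, closing the argument.
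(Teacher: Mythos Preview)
Your reverse direction matches the paper's: both show that HOD is closed under power set (you via the relativized Lemma~\ref{sep and HOD}, the paper via Theorem~\ref{full sep and hod}) and then invoke transitivity of HOD. You simply spell out the minimal-rank step that the paper compresses into one line.

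Your forward direction takes a genuinely different route. The paper proceeds by first computing an ordinal $\gamma$ bounding the parameters needed to $\Sigma_{2}$-define each subset of $x$, then running through all pairs $(\phi,\zeta)$ with $\zeta<\gamma$, using the effectivizer $S$ both to test which pairs pin down a unique subset of $x$ and to compute that subset as $\{y\in x:\exists z(z\subseteq x\wedge\phi(z,\zeta)\wedge y\in z)\}$, and finally assembling the results into a code for $\mathfrak{P}(x)$. You instead extract a single canonical code for $\mathfrak{P}(x)$ in one $\Sigma_{2}$-Sep call, with the global well-ordering $<_{H}$ fixing the code. Your approach trades the paper's enumeration of definitions for a direct encoding; this arguably makes the $n=2$ base case tighter, since the paper's search for $\gamma$ asks $S$ to check a formula with an unbounded leading $\forall\rho$, whose $\Sigma_{2}$ status is left unargued, whereas your single formula $\psi$ is demonstrably $\Sigma_{2}$ once the details are in place.

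Two points in your sketch need attention. First, the bound $\kappa+\omega$ is too small: the ordinals occurring in $\text{can-code}(\mathfrak{P}(x))$ are paired values of $\pi_{y}$, and the range of $\pi_{y}$ is the $\prec$-order-type of $\text{tc}(\{\mathfrak{P}(x)\})$, which may exceed $\kappa=|\mathfrak{P}(x)|$ both because $\text{tc}(x)$ contributes to this transitive closure and because an order type can exceed the cardinality of its field. You should compute (via further Sep calls, which is unproblematic for $\leq_{\text{OTM}}$) a bound on that order type. Second, and more importantly, your complexity verification hinges on $<_{H}$ being not merely $\Sigma_{2}$ but $\Delta_{2}$: expressing ``the range of the enumerating bijection $f$ is \emph{exactly} $\{c\in\text{tc}(\{y\}):c\prec a\}$'' requires $\neg(c\prec a)$ on the reverse inclusion, and only $\Delta_{2}$-ness of $\prec$ keeps this inside $\Sigma_{2}$ (via closure of $\Sigma_{2}$ under bounded quantification in ZF). This does hold---a $\Sigma_{2}$ linear order is automatically $\Pi_{2}$ as well, since $a<_{H}b\Leftrightarrow a\neq b\wedge\neg(b<_{H}a)$---but you should state it explicitly rather than defer it to ``careful bookkeeping.''
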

\begin{proof}
We assume that $n=2$; the other cases work entirely analogously.

First suppose that $\text{Pot}\leq_{\text{OTM}}\Sigma_{2}$-Sep. Now, $\Sigma_{2}$-Sep has a definable encoded effectivizer $\hat{F}$ as in the proof of Theorem \ref{full sep and hod}. It follows that, if $x$ is OTM-computable relative to $\hat{F}$, then so is $\mathfrak{P}(x)$. By Theorem \ref{full sep and hod}, it follows that, for every $x\in $\text{HOD}, we have $\mathfrak{P}(x)\in \text{HOD}$. By transitivity of $\text{HOD}$, this implies $V=\text{HOD}$.

On the other hand, suppose that $V=\text{HOD}$, and let $S$ be an encoded effectivizer for $\Sigma_{2}$-Sep. We show that, for all $x\in \text{HOD}$, $\mathfrak{P}^{\text{HOD}}(x)$ is uniformly $S$-OTM-computable in the input $x$. First, note that there is some ordinal $\gamma$ such that, for all $y\in\mathfrak{P}^{\text{HOD}}(x)$, $y$ is definable by a formula using only parameters $<\gamma$. Using $F$, such a $\gamma$ can in fact be computed on an OTM: Namely, let an OTM-program run through the ordinals, and, for each ordinal $\xi$ and each $\Sigma_{2}$-formula $\phi$, use $S$ to check whether the following statements holds true: 
 $$\forall{\rho\in\text{On}}\exists{\zeta<\xi}((\exists{!}z(z\subseteq x \wedge \phi(z,\rho)))\leftrightarrow(\exists{!}z(z\subseteq x \wedge \phi(z,\zeta))).$$
 Eventually, an ordinal $\xi$ will be found such that the answer is ``yes'' for all formulas $\phi$, which will be as desired. 
    
    Now, given $\gamma$, we can successively compute all elements of $\mathfrak{P}^{\text{HOD}}(x)$ as follows: Run through all pairs $(\phi,\zeta)$ of a formula $\phi$ and an ordinal $\zeta<\gamma$. Use $S$ to check the statement $\exists{!}z(z\subseteq x\wedge\phi(z,\zeta))$. For all pairs $(\phi,\zeta)$ for which this turns out to be true, use $F$ to compute $\{y\in x:\exists{z}(z\subseteq x\wedge \phi(z,\zeta)\wedge y\in z)\}$.
    After having run through all these pairs, compute a code for the set that has all the sets thus obtained as elements.
\end{proof}

We note several more reducibilities from and to Pot. 

\begin{theorem}{\label{pot}}
\begin{enumerate}
\item NextCard$\leq_{\text{OTM}}^{1,1}$Pot
\item If $V=L$, then NextCard$\equiv_{\text{OTM}}^{1,1}$Pot.
\item In general, Pot$\leq_{\text{OTM}}^{1,1}$NextCard is independent of ZFC.
\item Pot$\leq_{\text{OTM}}\Sigma_{3}$-Rep.\footnote{In fact, we have Pot$\leq_{\text{OTM}}\Pi_{2}$-Rep, for the obvious definition of the latter. Note, however, that this cannot be simplified further, since we require our formulas to be in prenex normal form, and expressing Pot requires bounded quantifiers on top of the initial unbounded universal quantifier.}
  (Fef) Moreover, we have $\phi\leq_{\text{OTM}}$Rep for every true $\in$-formula $\phi:\equiv\forall{x_{0}}\exists{y_{0}}...\forall{x_{n}}\exists{y_{n}}\psi(x_{0},y_{0},...,x_{n},y_{n}) $ which satisfies $$\forall{x_{0}}\exists{!}{y_{0}}...\forall{x_{n}}\exists!{y_{n}}\psi(x_{0},y_{0},...,x_{n},y_{n})$$ (i.e, for which witnesses are unique). 
\item NextCard$\leq_{\text{OTM}}^{1,1}\Sigma_{2}$-Sep.
\item For all $n\in\omega$, 
Pot$\leq_{\text{OTM}}^{1,1}\Sigma_{n}$-Sep is independent of ZFC. (Fef) Pot$\leq_{\text{OTM}}^{1,1}$Sep is independent of ZFC.
\end{enumerate}
\end{theorem}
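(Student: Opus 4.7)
The six parts can be grouped: (1)--(3) compare NextCard and Pot; (4) places Pot under $\Sigma_3$-Rep; (5) is an easy corollary; and (6) settles independence for Sep-reductions. My plan is to combine direct computations with the given effectivizers, the reductions already established in Propositions \ref{card and hp} and \ref{powercard} and Lemmas \ref{truth and sep} and \ref{sep and HOD}, together with Theorems \ref{full sep and hod} and \ref{pot sep hod}, using Cohen-forcing over $L$ for the negative directions.

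For (1), given a Pot-effectivizer $F$ and an ordinal $\alpha$, I would first obtain $\mathfrak{P}(\alpha)$, compute $\kappa:=\text{card}(\mathfrak{P}(\alpha))$ via Proposition \ref{card and hp}(7), and then apply $F$ once more to a code for $\kappa\times\kappa$ to get $\mathfrak{P}(\kappa\times\kappa)$. Since $\alpha^+\le\kappa$, the latter contains every bijection between an ordinal $\beta\in(\alpha,\kappa]$ and a smaller ordinal, so iterating through the $\beta$'s and testing for such a bijection yields the least cardinal above $\alpha$. For (2), in $L$ I would use the NextCard-effectivizer to compute $\alpha^+$, enumerate $L_{\alpha^+}$ by standard OTM techniques (e.g.\ \cite{CarlBuch}, Lemma 3.5.3), and collect the subsets of $\alpha$; the inclusion $\mathfrak{P}^L(\alpha)\subseteq L_{\alpha^+}$ together with $V=L$ delivers $\mathfrak{P}(\alpha)$. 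For (3), the positive direction is (2); for the negative side, let $G\subseteq\omega$ be Cohen-generic over $L$. In $L[G]$, the $L$-next-cardinal function is still an effectivizer for NextCard (cardinals are preserved) and is $L$-definable; any reduction would then make $\mathfrak{P}(\omega)^{L[G]}$ $L$-definable, contradicting $G\in\mathfrak{P}(\omega)^{L[G]}\setminus L$.

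Part (4) is the main technical content, which I would handle in two phases. Phase 1: apply a $\Sigma_3$-Rep effectivizer $H$ with $X=\{x\}$ and the $\Pi_1$-formula $\phi(z,y):\Leftrightarrow\forall w(w\in y\leftrightarrow w\subseteq z)$, whose uniqueness hypothesis is Pot itself, to obtain a set $Y\ni\mathfrak{P}(x)$. Phase 2: isolate $\mathfrak{P}(x)$ inside $Y$; testing whether $y=\mathfrak{P}(x)$ is $\Pi_1$, and by Lemma \ref{truth and sep} reduces to $\Sigma_1$-Sep, which in turn reduces to a single application of $\Sigma_2$-Rep via the standard trick $\phi'(y,v):\Leftrightarrow(v=\{y\}\wedge y=\mathfrak{P}(x))\vee(v=\emptyset\wedge y\neq\mathfrak{P}(x))$ (a quick prenex calculation shows $\phi'\in\Sigma_2\subseteq\Sigma_3$). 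Taking the union of the output set recovers $\mathfrak{P}(x)$. For the general claim about formulas of the form $\forall x_0\exists! y_0\cdots\forall x_n\exists! y_n\psi$, I would iterate this strategy: given $(x_0,y_0,\ldots,x_i)$, invoke Rep with the defining formula for $y_i$ (uniqueness holds by hypothesis) and extract the correct witness by a secondary Rep-based separation at each stage.

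For (5), the claim is immediate from Proposition \ref{card and hp}(9) since any $\Sigma_2$-Sep effectivizer restricts to a $\Sigma_1$-Sep one. For (6), the consistency of Pot$\leq_{\text{OTM}}\Sigma_n$-Sep in $L$ follows from Theorem \ref{pot sep hod} when $n\ge 2$, and from Proposition \ref{card and hp}(9) combined with the enumeration of $L_{\alpha^+}$ as in (2) when $n=1$. The consistency of the negation comes from a Cohen extension $L[G]$ with $G\subseteq\omega$ generic: since Cohen forcing is weakly homogeneous, $\text{HOD}^{L[G]}=L$, so $G\in\mathfrak{P}(\omega)^{L[G]}\setminus\text{HOD}^{L[G]}$, and by Lemma \ref{sep and HOD} no code for $\mathfrak{P}(\omega)^{L[G]}$ is OTM-computable relative to any $\Sigma_n$-Sep effectivizer, whence Pot$\nleq_{\text{OTM}}\Sigma_n$-Sep. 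The (Fef) variant is analogous, invoking Theorem \ref{full sep and hod} in place of Lemma \ref{sep and HOD}. The main obstacle is the complexity bookkeeping in (4), where one must verify that both phases stay inside $\Sigma_3$-Rep and, for the general iterated-uniqueness statement, that at each stage the defining formula for $y_i$ cleanly absorbs the remaining quantifier alternations.
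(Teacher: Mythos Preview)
Your argument is correct and closely tracks the paper for parts (2), (3), (5) and (6). Two items merit comment.

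For (1), your detour via $\kappa=\text{card}(\mathfrak{P}(\alpha))$ and a further call for $\mathfrak{P}(\kappa\times\kappa)$ works, but the paper takes a shorter route: a single application of $F$ to $\alpha$ yields $\mathfrak{P}(\alpha)$, which (via a pairing bijection $\alpha\times\alpha\leftrightarrow\alpha$ for infinite $\alpha$) already contains every well-ordering of $\alpha$; an OTM can recognise the well-orderings, compute their order types, and output the supremum, which is $\alpha^{+}$. Your version is correct but uses three oracle calls where one suffices.

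For (4), the paper simply applies the Rep-effectivizer to the singleton $\{x\}$ with the $\Pi_{1}$-formula ``$y=\mathfrak{P}(x)$'' and reads off the answer, implicitly treating the output as the exact image (as its proof of Lemma~\ref{sep and rep}(1) also does). On that reading your Phase~2 is over-engineered; and on the literal ``superset'' reading of the paper's Rep axiom your Phase~2 does not actually close the gap: applying Rep with your $\phi'$ again returns only a superset of $\{\{\mathfrak{P}(x)\},\emptyset\}$, so the union need not be $\{\mathfrak{P}(x)\}$ (and even in the exact-image case you need one more union to reach $\mathfrak{P}(x)$ rather than $\{\mathfrak{P}(x)\}$). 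If you want an extraction that works regardless of which reading is intended, note that from any $Y\ni\mathfrak{P}(x)$ one can compute $Y':=\{y\in Y:\forall z\in y\ z\subseteq x\}$ by a $\Delta_{0}$ test and return $\bigcup Y'=\mathfrak{P}(x)$, with no further Rep call at all. The same extraction difficulty affects your iterated handling of the general $\exists!$ claim, where no analogous $\Delta_{0}$ shortcut is available; the paper is equally terse there.
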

\begin{proof}
\begin{enumerate}
    \item Let $F$ be an effectivizer for Pot, and let $\alpha$ be an ordinal. Then we obtain the cardinal successor of $\alpha$ from $F$ as follows: We have $F((\alpha))=\mathfrak{P}(\alpha)$. An OTM can check whether a set of ordinals codes a well-ordering.\footnote{See, e.g., \cite{CarlBuch}, Theorem 2.3.25; the argument is essentially the same given by Hamkins and Lewis for subsets of $\omega$, see \cite{Hamkins-Lewis}, Theorem 2.2.} We use this to compute the set of all codes for well-orderings in $\mathfrak{P}(\alpha)$ -- which will contain codes for all ordinals below $\alpha^{+}$, and from this set, compute a code for their sum, which will be $\alpha^{+}$. 
    \item One direction follows from (1). We consider the other direction: In $L$, we have $\mathfrak{P}(\alpha)=\mathfrak{P}^{L[\alpha^{+}]}(\alpha)$. Hence, we can compute $(\alpha^{+})^{L}$ using an effectivizer for NextCard, then use that to compute a code for $L_{(\alpha^{+})^{L}}$ and then extract all subsets of $\alpha$ contained in it.
    \item By (2), Pot$\leq_{\text{OTM}}^{1,1}$NextCard is consistent relative to ZFC. 

    For the other direction, let $L[g]$ be a generic extension of $L$ by a Cohen real $g\subseteq\omega$. Suppose for a contradiction that $P$ is an OTM-program that, in the parameter $\rho\in\text{On}$, reduces Pot to NextCard in $L[g]$. Let $F$ be an effectivizer of NextCard in $L[g]$. Then $F$ is definable in $L[g]$, and, since $L[g]$ has the same cardinals as $L$, $F$ is also definable in $L$. By absoluteness of computations, the computation of $P^{F}(\omega,\rho)$ is the same in $L$ and in $L[g]$. It follows that $g\in\mathfrak{P}^{L[g]}(\omega)\in L$, a contradiction.

    \item Let $F_{\text{Rep}}$ be an effectivizer for the replacement scheme. Let $\phi$ be a true formula with the given properties. 
    Now let (codes for) sets $a_{0},a_{1},...,a_{n}$ be given; we need to compute (successively) $b_{0},b_{1},...,b_{n}$ such that $\psi(a_{0},b_{0},...,a_{n},b_{n})$, where the computation of $b_{i}$ can only use $a_{0},...,a_{i}$ as its input. Let $k\leq n$, and suppose that the $b_{i}$ with $i<k$ have already been computed. Consider the formula $\phi^{k}:\equiv\forall{x_{k}}\exists{y_{k}}...\forall{x_{n}}\exists{y_{n}}\psi(a_{0},b_{0},...,a_{k-1},b_{k-1},x_{k},y_{k},...,x_{n},y_{n})$. 
    Then applying $F_{\text{Rep}}$ to $\phi_{k}$ and the set $\{a_{k}\}$ will produce a (code for) a set $Y\ni b_{k}$. We have already seen that separation is reducible to replacement and that a truth predicate is reducible to separation. Thus, given access to $F_{\text{Rep}}$, we can now run through $Y$ and check for each element $y\in Y$ whether it satisfies $\forall{x_{k+1}}\exists{y_{k+1}}...\forall{x_{n}}\exists{y_{n}}\psi(a_{0},b_{0},...,a_{k},y,x_{k+1},y_{k+1},...,x_{n},y_{n})$. By assumption, $Y$ contains a unique set $y$ with this property, which will eventually be found, after which we write it to the output tape. 
    
    This reduces $\phi$ to Rep.

    This argument then applies to Pot, noting that $y=\mathfrak{P}(x)$ can be expressed by a $\Pi_{2}$-formula in prenex normal form, and power sets are unique.
    \item Using the fact that an effectivizer for $\Sigma_{n}$-Sep allows us to evaluate the truth of $\Sigma_{n}$-statements, we can, for a given $\alpha\in\text{On}$, successively count through the ordinals, checking for each ordinal $\alpha$ whether $\alpha>\kappa\wedge\text{Card}(\alpha)$. The first ordinal for which this holds true is NextCard$(\alpha$).
    \item First suppose that $V=L$. 
    We know from Proposition \ref{card and hp} that NextCard$\leq_{\text{OTM}}\Sigma_{1}-\text{Sep}$; thus, using an effectivizer for $\Sigma_{1}-\text{Sep}$, we can compute the cardinal successor $\kappa$ of $\alpha$ in $L$.  
    Then compute $\mathfrak{P}^{L_{\kappa}}(x)$, which will be $\mathfrak{P}^{L}(x)$. 

    On the other hand, let $V$ be such that $V\neq\text{HOD}$.\footnote{For example, one can pick $V:=L[x]$, where $x$ is Cohen-generic over $L$. Since Cohen forcing is weakly homogenous (and definable without parameters in $L$), we then have $\text{HOD}^{L[x]}\neq$$\text{HOD}^{L}=L$ (see, e.g., \cite{WDR}, p. 2).} Then there must be some set $x\in\text{HOD}$ such that $\mathfrak{P}(x)\neq\mathfrak{P}^{\text{HOD}}(x)$ (for example, there must be some minimal ordinal $\alpha$ such that $V_{\alpha}\notin\text{HOD}$). Now, if $P$ was a program reducing Pot to $\Sigma_{n}$-Sep for some $n\in\omega$, then, for any effectivizer $F$ of $\Sigma_{n}$-Sep, we would have the result of the computation $P^{F}(x)\in\text{HOD}$ by Lemma \ref{sep and HOD}. But then, we would have $\mathfrak{P}(x)\in\text{HOD}$, a contradiction. Thus, it is consistent with ZFC that Pot$\nleq_{\text{OTM}}^{1,1}\Sigma_{n}$-Sep, for all $n\in\omega$.

\end{enumerate}    
\end{proof}

By a slight variant of the argument concerning Pot and NextCard, we see the stronger result that the reduction of Pot to Card is also independent of ZFC:

\begin{corollary}{\label{pot and card}}
       Pot$\leq_{\text{OTM}}$Card is independent of ZFC.
\end{corollary}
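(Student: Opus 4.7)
(Proposal.) The consistency direction is largely a recycling of results already in hand. In $L$ (or, more generally, in any model where cardinals agree with those of $L$), we chain existing reductions: by Proposition \ref{card and hp}(2) and (5), Card $\equiv_{\text{OTM}}$ DecCard $\equiv_{\text{OTM}}$ NextCard, and by Theorem \ref{pot}(2), Pot $\equiv_{\text{OTM}}$ NextCard. Composing these gives Pot $\leq_{\text{OTM}}$ Card in $L$, so consistency of the reduction is settled.

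For the consistency of the non-reduction, my plan is to adapt the Cohen-forcing argument from Theorem \ref{pot}(3). Let $g\subseteq\omega$ be Cohen-generic over $L$ and work in $L[g]$; recall that $L[g]$ has the same cardinals as $L$. The new wrinkle, compared with the NextCard case, is that an effectivizer for Card has to return not merely a cardinal but also a bijection, so we cannot simply take $\alpha\mapsto(\alpha^{+})^{L}$; the effectivizer must act on arbitrary codes $c\subseteq\text{On}$ and return codes for pairs $(\kappa,f)$. The key idea is to define an effectivizer $F$ of Card in $L[g]$ which, although not globally definable in $L$, agrees on $L$-codes with an $L$-definable function. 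Concretely: given $c\subseteq\text{On}$, decode $c$ to get $s$; set $\kappa=\text{card}(s)$ (absolute between $L$ and $L[g]$); if $\text{tc}(s)\in L$, take $f$ to be the $<_{L}$-least bijection $\kappa\to s$ (which exists in $L$ because $L$ has a global well-ordering); otherwise use the $<_{L[g]}$-least such $f$. Output a code for $(\kappa,f)$ via some fixed OTM-absolute pairing/coding scheme.

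Two observations carry the argument. First, $F$ is manifestly an effectivizer for Card in $L[g]$. Second, the restriction of $F$ to $L$-codes coincides with a function $F_{L}$ which is definable in $L$. Now suppose for contradiction that $(P,\rho)$ reduces Pot to Card in $L[g]$, so that $P^{F}(\bar{\omega},\rho)$ outputs a code for $\mathfrak{P}^{L[g]}(\omega)$, where $\bar{\omega}$ is a code for $\omega$ lying in $L$. I would show by transfinite induction on the computation length that the entire tape content of $P^{F}(\bar{\omega},\rho)$ at every stage belongs to $L$: the initial tape content does, ordinary OTM transitions (including the liminf rule at limits) preserve $L$-membership, and whenever an $F$-call occurs the input code on the tape is in $L$ by induction, so the output equals $F_{L}$ applied to that code and is again in $L$. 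Hence the final output code lies in $L$ and therefore decodes to a set in $L$; but it should decode to $\mathfrak{P}^{L[g]}(\omega)$, which contains $g\notin L$, a contradiction.

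The main subtlety I expect is the bookkeeping for the inductive claim that tape contents stay in $L$, especially at limit stages and for the precise way $F$ is coded so that $F\upharpoonright L$ genuinely agrees with an $L$-definable function (one must be careful that the ``is $\text{tc}(s)\in L$?'' test, the choice of $<_{L}$-least bijection, and the chosen coding scheme all commute with $L$-definability on $L$-inputs). Once that is set up cleanly, the contradiction via $g\notin L$ is immediate from the argument of Theorem \ref{pot}(3).
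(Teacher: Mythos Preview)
Your approach is essentially the same as the paper's in both directions. For consistency, the paper argues directly (use Card to get $\kappa=\text{card}(\text{tc}(x))$, search upward for $\kappa^{+}$, build $L_{\kappa^{+}}$ and read off the subsets), while you chain the already-established equivalences; both are fine. For non-reduction, both you and the paper work in $L[g]$ with $g$ Cohen-generic, redefine a Card-effectivizer so that on constructible inputs it returns the $<_{L}$-least bijection, and argue that the computation $P^{F}(\omega,\rho)$ then never leaves $L$, contradicting $g\in\mathfrak{P}^{L[g]}(\omega)\setminus L$.

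One correction is needed: your branching criterion should be ``$s\in L$'' (equivalently ``$\text{tc}(\{s\})\in L$''), not ``$\text{tc}(s)\in L$''. These are not equivalent: take $s=g$. A Cohen-generic real is cofinal in $\omega$, so $\text{tc}(g)=\omega\in L$ and your first branch fires; but there is no bijection $\omega\to g$ in $L$ at all (any such map has $g$ as its range and hence encodes $g$), so the ``$<_{L}$-least bijection'' is undefined and your $F$ fails to be a total effectivizer for Card on $L[g]$. The paper uses the criterion ``$a$ is constructible'', which is exactly this fix. With that one-symbol change, your inductive argument that tape contents stay in $L$ goes through; the subtleties you flag at the end (limit stages, absoluteness of the coding scheme) are routine.
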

\begin{proof}
     In $L$, we have Pot$\leq_{\text{OTM}}$Card, as we can, for a given set $x$, use Card to determine first the cardinality $\kappa$ of $\text{tc}(x)$ and then (by searching through the ordinals) its successor $\kappa^{+}$ (this shows that NextCard$\leq_{\text{OTM}}$Card), from which we can compute a code for $L_{\kappa^{+}}$. Then the subsets of $x$ contained in $L_{\kappa^{+}}$ will be all subsets of $x$ in $L$, which we return as the powerset of $x$.

    On the other hand, let $g\subseteq\omega$ be Cohen-generic over $L$; then all constructible sets have the same cardinality in $L$ and in $L[g]$. Pick an effectivizer $F$ of Card in $L[g]$; we may assume without loss of generality that the restriction of $F$ to $L$ is definable in $L$ (e.g., by re-defining $F$ to pick, for each constructible set $a$, the $<_{L}$-smallest bijection between $a$ and its $L$-cardinality, and for all other sets the one that $F$ delivers). Now, if $(P,\rho)$ would witness the reduction in $L[g]$, then, in particular $P^{F}(\omega,\rho)$ would compute $\mathfrak{P}^{L[g]}(\omega)$, which contains the non-constructible element $g$ and is thus itself not constructible. However, by assumption on $F$, the whole computation will take place in $L$ and can thus not lead to a non-constructible output. 

\end{proof}

\begin{lemma}{\label{sep and rep}}
\begin{enumerate}
    \item Sep$\leq_{\text{OTM}}^{1,1}$Rep\footnote{This result was briefly mentioned in \cite{Ca2018}, section $6$.}
    \item (Fef) Rep$\leq_{\text{OTM}}^{1,1}$Sep$\oplus$Pot
    \item (Fef) Rep$\equiv_{\text{OTM}}^{1,1}$Coll; moreover, $\Sigma_{n}$-Rep$\leq_{\text{OTM}}\Sigma_{n}$-Coll, for all $n\in\omega$.
    \item (Fef) Coll$\leq_{\text{OTM}}^{1,1}$Sep$\oplus$Pot
    \item $\Sigma_{n}$-Coll$\leq_{\text{OTM}}^{1,1}\Sigma_{n}$-Sep$\oplus$Pot, for all $n\in\omega$.
    \item If $V=L$, then $\Sigma_{n}$-Rep$\equiv_{\text{OTM}}\Sigma_{n}$-Sep, for all $n\in\omega$. (Fef) Thus, also Rep$\equiv_{\text{OTM}}$Sep. 
    \item $\Sigma_{3}$-Rep$\leq_{\text{OTM}}^{1,1}$Sep is independent of ZFC.
\end{enumerate}
\end{lemma}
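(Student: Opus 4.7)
The plan is to obtain both halves of the independence directly from earlier items in the paper, without introducing any new forcing construction.

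For the consistency of the reduction, I would work in $V=L$ (viewed, where needed, inside a Feferman model so that computations relative to Sep-effectivizers are well-defined). Part~(6) of the present lemma already gives $\Sigma_{3}\text{-Rep}\equiv_{\text{OTM}}\Sigma_{3}\text{-Sep}$ in $L$, while an encoded effectivizer for the full separation scheme $\text{Sep}$ is automatically an encoded effectivizer for its $\Sigma_{3}$-sub-scheme (just restrict the formula-index input to codes of $\Sigma_{3}$-formulas), so $\Sigma_{3}\text{-Sep}\leq_{\text{OTM}}\text{Sep}$ holds outright in every model. Composing these via transitivity of $\leq_{\text{OTM}}$ (Lemma~\ref{order properties}) yields $\Sigma_{3}\text{-Rep}\leq_{\text{OTM}}\text{Sep}$ in~$L$.

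For the consistency of the failure, I would invoke the (Fef)-clause of Theorem~\ref{pot}(6) to fix a model $M$ of Feferman set theory in which $\text{Pot}\nleq_{\text{OTM}}\text{Sep}$. In every model of ZFC, Theorem~\ref{pot}(4) supplies $\text{Pot}\leq_{\text{OTM}}\Sigma_{3}\text{-Rep}$, so this reduction holds in $M$ as well. Consequently, if also $\Sigma_{3}\text{-Rep}\leq_{\text{OTM}}\text{Sep}$ held in $M$, transitivity of $\leq_{\text{OTM}}$ would chain the two reductions into $\text{Pot}\leq_{\text{OTM}}\text{Sep}$ in $M$, contradicting the choice of $M$; hence $\Sigma_{3}\text{-Rep}\nleq_{\text{OTM}}\text{Sep}$ in $M$, and combined with the $L$-side this gives the desired independence.

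The only point demanding any attention is the routine verification that the transitivity composition really goes through in the oracle setting: given a program $Q$ witnessing $\Sigma_{3}\text{-Rep}\leq_{\text{OTM}}\text{Sep}$ and a program $P$ witnessing $\text{Pot}\leq_{\text{OTM}}\Sigma_{3}\text{-Rep}$, one replaces every oracle query made by $P$ to its $\Sigma_{3}\text{-Rep}$-oracle with a sub-run of $Q$ applied to the fixed Sep-effectivizer and the currently tabulated query, returning the result to $P$. This is precisely the argument already underlying Lemma~\ref{order properties}, so no further analysis of HOD or of the forcing behind Theorem~\ref{pot}(6) is needed; the conceptual content of the proof is simply the observation that Pot is effectively sandwiched between $\Sigma_{3}\text{-Rep}$ and Sep in the relevant model.
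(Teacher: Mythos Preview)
Your argument for item~(7) is correct and is essentially the paper's own proof: both directions are derived from earlier results by transitivity, using item~(6) (the paper's reference to ``(3)'' for the consistency half appears to be a typo for ``(6)'') together with Theorem~\ref{pot}(4) and~(6). Your write-up is in fact slightly cleaner than the paper's, since you invoke the (Fef)-clause of Theorem~\ref{pot}(6) to work directly with full Sep rather than detouring through $\Sigma_n$-Sep.
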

\begin{proof}
\begin{enumerate}
\item Given an $\in$-formula $\phi(x,\vec{z})$, a (coded) parameter $\vec{p}$, a (code for a) set $X$, and an effectivizer $F$ for the replacement scheme, let $\psi(x,y,\vec{z})$ be the $\in$-formula $(\phi(x,\vec{z})\wedge y=x)\vee(\neg\phi(x,\vec{z})\wedge y=X)$, use $F$ to obtain $Y^{\prime}:=F(\lceil\psi\rceil,\vec{z},X)$ and then compute $Y$ from $Y^{\prime}$ by deleting $X$. Since OTMs can test whether two sets of ordinals code the same set, the final step can be done on an OTM.
\item Let $F_{\text{Sep}}$ and $F_{\text{Pot}}$ be effectivizers for the separation scheme and the power set axiom, respectively, let $X$ be a set and let $\phi(x,y,\vec{z})$ be an $\in$-formula and $\vec{p}$ a parameter such that $\phi$ is functional on $X$ (i.e., such that $\forall{x\in X}\exists!{y}\phi(x,y,\vec{z})$) for $\vec{z}=\vec{p}$. By Proposition \ref{truth and sep}, we can obtain a truth predicate from $F_{\text{Sep}}$. Now use $F_{\text{Pot}}$ iteratively, starting with $\emptyset$, applying $F_{\text{Pot}}$ at sucessor levels and forming unions at limit levels (forming unions is computable on an OTM) to successively obtain the stages $V_{\iota}$ of the von Neumann hierarchy.\footnote{OTMs with access to a power set operator (``Power-OTMs'') are equivalent to the (full) Set Register Machines introduced and studied by Passmann, \cite{Passmann}; their ability to enumerate the $V$-hierarchy is essentially \cite{Passmann}, Proposition 3.10.} Use the truth predicate to test, for each of these levels, whether it satisfies $\forall{x\in X}(\exists{y}\phi(x,y,\vec{p})\rightarrow\exists{y\in V_{\iota}}\phi(x,y,\vec{p}))$. By assumption, such a level will eventually be found. This level is then our desired set.\footnote{One can now additionally use $F_{\text{Sep}}(\lceil\exists{x}\phi(x,y,\vec{p}\rceil,V_{\iota})$ to obtain the subset that \textit{only} contains sets required to exist by the respective instance of replacement.}
\item For all $n\in\omega$, $\Sigma_{n}$-Rep$\leq_{\text{OTM}}\Sigma_{n}$-Coll, and hence also Rep$\leq_{\text{OTM}}$Coll, are trivial, as every instance of replacement is also an instance of collection. 

For the other direction, we use an effective version of the usual argument that replacement implies collection in ZF: Let an $\in$-formula $\phi$, a parameter $\vec{p}$ and a set $X$ be given such that $\forall{x\in X}\exists{y}\phi(x,y,\vec{p})$. Consider the formula $\phi^{\prime}(x,\alpha,Y,\vec{p})$, which states that $\alpha\in\text{On}$ is minimal with the property that $\exists{y\in V_{\alpha}}\phi(x,y,\vec{p})$ and that $Y$ is the set of all $y\in V_{\alpha}$ that satisfy $\phi(x,y,\vec{p})$. Then $\phi^{\prime\prime}(x,Y,\vec{p}):\Leftrightarrow\exists{\alpha}\phi^{\prime}(x,\alpha,Y,\vec{p})$ defines a function from $X$ to $V$. Applying an effectivizer for $F_{\text{Rep}}$ to the set $X$ for $\phi^{\prime\prime}$ in the parameter $\vec{p}$, we obtain a set $Z$ such that $\bigcup{Z}$ will be the set required to exist by the given instance of Coll.

\item This follows immediately from (2), (3) and the transitivity of $\leq_{\text{OTM}}^{1,1}$. 

\item 
We use a similar idea as in (2), using an additional idea, which is basically an effective version of Scott's trick: Given a $\Sigma_{n}$-formula $\phi$, a parameter $\vec{p}$ and a set $X$. For each $x\in X$, we again use $F_{\text{Pot}}$ to enumerate the universe until we arrive at some $V_{\alpha}$ such that $\exists{y\in V_{\alpha}}\phi(x,y,\vec{p})$ (which can be identified by searching through each occurring $V$-level and testing each element using $F_{\Sigma_{n}-\text{Sep}}$). Now, again using $F_{\Sigma_{n}-\text{Sep}}$, compute the set $S_{x}:=\{y\in V_{\alpha}:\phi(x,y,\vec{p}\}$ and store the result. Once this has been done for all $x\in X$, compute $\bigcup_{x\in X}S_{x}$, which will be as desired.

\item Let $n\in\omega$, $\vec{p}$ a parameter, and let $\phi(x,y,z)$ be a $\Sigma_{n}$-formula which is functional for $z=\vec{p}$. Moreover, let an effectivizer $F$ for $\Sigma_{n}$-Sep be given, along with a (code for a) set $X$. 
Recall that, using $F$, we can evaluate the truth of $\Sigma_{n}$-formulas. The required OTM-program now works by enumerating $L$, and, for each $Y\in L$, performing the following:
\begin{enumerate}
    \item Running through $X$, and, for each $x\in X$, running through $Y$ and searching for some $y\in Y$ such that $\phi(x,y,\vec{p})$. If, for some $x\in X$, no such $y$ is found, the search through $L$ is continued.
    \item Running through $Y$, and, for each $y\in Y$, running through $X$ and searching for some $x\in X$ such that $\phi(x,y,\vec{p})$. If, for some $y\in Y$, no such $x$ is found, the search through $L$ is continued.
\end{enumerate}
Since $L$ is a model of replacement, the search will eventually reveal a set $Y$ for which both checks are successful; this will then be returned as the output. 
Since the procedure works uniformly in $\phi$, it also witnesses Rep$\leq_{\text{OTM}}^{1,1}$Sep in general, provided the relevant computations are defined. 
\item The relative consistency follows from (3). For the converse, we have already seen that Pot$\leq_{\text{OTM}}^{1,1}\Sigma_{3}$-Rep. If additionally $\Sigma_{3}$-Rep$\leq_{\text{OTM}}^{1,1}\Sigma_{n}$-Sep for any $n\in\omega$, then it follows by transitivity of $\leq_{\text{OTM}}^{1,1}$ that Pot$\leq_{\text{OTM}}^{1,1}\Sigma_{n}$-Sep. Thus, any model of ZFC where the latter fails is also one where $\Sigma_{3}$-Rep$\nleq_{\text{OTM}}^{1,1}\Sigma_{n}$-Sep, for all $n\in\omega$. But the existence of such models was already seen in the proof of Theorem \ref{pot}(6).
\end{enumerate}
\end{proof}


Given that Rep$\leq_{\text{OTM}}^{1,1}$Sep$\oplus$Pot, one may asked how far Pot can be weakened in this statement. We note that NextCard is not enough:

\begin{lemma}{\label{rep sep pot}}
It is consistent with ZFC that:

\begin{enumerate}
    \item Pot$\nleq_{\text{OTM}}^{1,1}$Sep$\oplus$NextCard
    \item $\Sigma_{3}$-Rep$\nleq_{\text{OTM}}^{1,1}$Sep$\oplus$NextCard
\end{enumerate}
\end{lemma}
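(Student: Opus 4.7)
The plan is to repeat the HOD-based argument of Theorem \ref{pot}(6) after observing that NextCard admits a definable encoded effectivizer whose outputs are all ordinals, so adjoining it does nothing to enlarge the class of sets OTM-computable from a definable Sep-effectivizer.

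\textbf{Step 1: Choice of model.} Take $V=L[g]$ with $g\subseteq\omega$ Cohen-generic over $L$ (or any other model of ZFC with $V\neq\text{HOD}$ in which some $x\in\text{HOD}$ has $\mathfrak{P}(x)\notin\text{HOD}$; in the specific case $V=L[g]$ we have $\text{HOD}=L$ and $g\in\mathfrak{P}(\omega)\setminus L$). Fix such an $x$.

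\textbf{Step 2: A definable effectivizer for Sep$\oplus$NextCard.} As in the proof of Theorem \ref{full sep and hod}, pick the canonical definable encoded effectivizer $\hat{F}_{\text{Sep}}$ for Sep (listing elements of the separated set in the order inherited from the code of the ambient set). For NextCard, take the natural encoded effectivizer $\hat{F}_{\text{NC}}$ sending (a code for) an ordinal $\alpha$ to (the canonical code of) $\alpha^{+}$. Both are definable without parameters, and in particular every value of $\hat{F}_{\text{NC}}$ is an ordinal, hence absolutely an element of HOD.

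\textbf{Step 3: HOD-preservation.} I would now rerun the transfinite induction of Lemma \ref{sep and HOD} for computations of the form $P^{\hat{F}_{\text{Sep}},\hat{F}_{\text{NC}}}(c_{x},\rho)$ to show that every stage of the computation, and in particular every halting output, lies in HOD. The base case, the limit step, and the successor step for non-oracle commands are identical to those in the proof of Lemma \ref{sep and HOD}. The successor step for an $\hat{F}_{\text{Sep}}$-call is exactly the case treated there. The only genuinely new case is the successor step for an $\hat{F}_{\text{NC}}$-call: by induction the tape content at this moment lies in HOD, so the ordinal $\alpha$ it codes lies in HOD; then $\alpha^{+}\in\text{HOD}$ (HOD models ZFC), and the canonical code of $\alpha^{+}$ is definable from $\alpha$ and therefore lies in HOD. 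This closes the induction.

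\textbf{Step 4: Conclusion.} Any OTM-reduction $(P,\rho)$ of Pot to Sep$\oplus$NextCard would in particular, when run against $(\hat{F}_{\text{Sep}},\hat{F}_{\text{NC}})$ on a code $c_{x}\in\text{HOD}$ for $x$, have to output a code for $\mathfrak{P}(x)$. By Step~3 the output is in HOD, forcing $\mathfrak{P}(x)\in\text{HOD}$, contrary to the choice of $x$. This establishes (1). For (2), Theorem \ref{pot}(4) gives Pot$\leq_{\text{OTM}}\Sigma_{3}$-Rep, so a hypothetical reduction $\Sigma_{3}$-Rep$\leq_{\text{OTM}}$Sep$\oplus$NextCard would, by transitivity of $\leq_{\text{OTM}}$, yield Pot$\leq_{\text{OTM}}$Sep$\oplus$NextCard, contradicting (1).

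The main potential obstacle is the new case in the induction for $\hat{F}_{\text{NC}}$-calls. This is handled by working with the specific encoding $\alpha\mapsto\alpha^{+}$, whose outputs are ordinals: since HOD is transitive and contains all ordinals, closure under $\hat{F}_{\text{NC}}$ is immediate, and no subtlety arises of the kind that forced the careful choice of encoding in Theorem \ref{full sep and hod}.
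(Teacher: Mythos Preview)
Your proposal is correct and follows essentially the same route as the paper: work in $L[g]$ for a Cohen real $g$, extend the HOD-preservation argument of Lemma~\ref{sep and HOD}/Theorem~\ref{full sep and hod} to computations that also call a NextCard-effectivizer, and then deduce (2) from (1) via Pot$\leq_{\text{OTM}}\Sigma_{3}$-Rep. The only presentational difference is that the paper handles the NextCard-call case by noting that Cohen forcing preserves cardinals (so NextCard$=$NextCard$^{L}$ is $L$-definable), whereas you handle it by the cleaner observation that the output $\alpha^{+}$ is an ordinal and hence automatically in HOD; your justification ``HOD models ZFC'' is not quite the right reason here, but your final paragraph gives the correct one.
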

\begin{proof}
Let $V=L[g]$, where $g\subseteq\omega$ is Cohen-generic over $L$. Then Card$=$Card$^{L}$, and so NextCard$=$NextCard$^{L}$. As in Theorem \ref{full sep and hod} above, it follows that every set computable by an OTM relative to every Sep$\oplus$NextCard-effectivizer is an element of $\text{HOD}^{L[g]}=L$. 
However, we have $x\notin L$, but $x\in\mathfrak{P}(\omega)$, so $\mathfrak{P}(\omega)$ is not OTM-computable from Sep$\oplus$NextCard. Thus Pot$\nleq_{\text{OTM}}^{1,1}$Sep$\oplus$NextCard, for, if $P$ and $\rho$ were a program and a parameter witnessing the reduction, then we could compute $\mathfrak{P}(\omega)$ from Sep$\oplus$NextCard by first computing $\omega$ and then applying Pot to it. 
Since Pot$\leq_{\text{OTM}}^{1,1}\Sigma_{3}$-Rep, it also follows that $\Sigma_{3}$-Rep$\nleq_{\text{OTM}}^{1,1}$Sep$\oplus$NextCard in $L[g]$.
\end{proof}

\subsection{The axiom of choice}

Finally, we have a look at the axiom of choice. The mutual effective reducibility of various versions of the axiom of choice was already considered in \cite{Ca2018}, where it was shown (ibid., Proposition $14$) that the usual equivalent formulations of this axiom are also mutually OTM-reducible to each other. One might to think that AC$\leq_{\text{OTM}}$Pot for the following reason: If $X$ is a set of non-empty sets, then $\mathfrak{P}(X\times\bigcup{X})$ will contain a choice function for $X$. Thus, an OTM with access to the power set operator can, uniformly in $X$, compute a set $S$ containing a choice function for $X$; and then one can search through $S$, identify a choice function and write it to the output.
Unfortunately, $S$ will, except in trivial cases, contain not only one, but many choice functions, and choosing one of these is as hard choosing from the elements of $X$ in the first place. 



\begin{defini}{\label{definable effectivizer}}
A definable effectivizer for an $\in$-formula $\phi$ is an effectivizer for $\phi$ that is definable as a class function in the language of ZFC (i.e., using the binary relation symbol $\in$ alone) and such that ZFC proves that $F$ is in fact an effectivizer for $\phi$.
\end{defini}

\begin{lemma}{\label{definability and reducibility}}
Let $\phi$ and $\psi$ be $\in$-formulas. Suppose that $\psi$ has a definable effectivizer, and that $\phi$ is ZFC-provably OTM-reducible to $\psi$. Then $\phi$ has a definable effectivizer.
\end{lemma}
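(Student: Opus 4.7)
The plan is to trace a definable effectivizer for $\phi$ through the given reduction, relying on the fact that OTM-computations relative to definable oracles are themselves definable in the language of ZFC.

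First I would fix a definable effectivizer $F_{\psi}$ for $\psi$, and then fix a definable encoding $\hat{F}_{\psi}$ of it: for a set of ordinals $c$, $\hat{F}_{\psi}(c)$ is the canonical code of $F_{\psi}(x)$ whenever $c$ codes a set $x$ (with any fixed convention, e.g.\ outputting $\emptyset$, when $c$ does not code a set). Since $F_{\psi}$ is ZFC-definable and the coding relation between sets of ordinals and sets is itself ZFC-definable, $\hat{F}_{\psi}$ is a definable class function $\mathfrak{P}(\text{On}) \to \mathfrak{P}(\text{On})$, and ZFC proves that $\hat{F}_{\psi}$ is an encoding of an effectivizer of $\psi$; that is, the axiom $\mathcal{A}$ from the reducibility framework holds in ZFC when $F$ is interpreted as $F_{\psi}$.

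Next, by hypothesis the reduction is witnessed in ZFC: there exist an OTM-program $P$ and an ordinal parameter $\rho$ such that ZFC$(F)+\mathcal{A}$ proves that for each encoding $\hat{F}$ of $F$, $P^{\hat{F}}$ computes an encoded effectivizer of $\phi$. Interpreting $F$ as $F_{\psi}$, ZFC proves that $P^{\hat{F}_{\psi}}$ (with parameter $\rho$) computes an encoded effectivizer $\hat{G}$ of $\phi$. The key observation is now that the relation ``the OTM-program $P$, started with input $c$, parameter $\rho$, and oracle $\hat{F}_{\psi}$, halts with output $d$'' is expressible by an $\in$-formula in the free variables $c$ and $d$: the step-by-step computation (including the effect of each oracle call, which is resolved via the definition of $\hat{F}_{\psi}$) can be formalised in ZFC in exactly the same way OTM-computations are formalised in \cite{CarlBuch}. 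Thus the class function $\hat{G}$ that sends $c$ to the unique $d$ for which this formula holds is definable in the language of ZFC.

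Finally, from the definable encoded effectivizer $\hat{G}$ I recover a definable effectivizer $G$ for $\phi$ itself: for each tuple $(x_{0}, \dots, x_{i})$, $G((x_{0}, \dots, x_{i}))$ is the set coded by $\hat{G}(c)$ where $c$ is (say) the $<_{\text{ZFC-definable-order}}$-least code for $(x_{0}, \dots, x_{i})$; both the choice of a canonical code and the decoding are ZFC-definable. Then ZFC proves that $G$ satisfies the defining condition of an effectivizer for $\phi$, because ZFC already proved this for $\hat{G}$ under the given encoding conventions. I do not expect any serious obstacle here; the only point requiring care is the formalisation claim that the behaviour of $P^{\hat{F}_{\psi}}$ is expressible by a single $\in$-formula, which amounts to the standard observation that OTM-computations are absolute and definable once the oracle is, and this is already used implicitly throughout the paper (for instance, in the Feferman-theory remark following Definition \ref{principles}).
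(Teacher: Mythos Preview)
Your argument has a genuine gap at exactly the point the paper flags as the subtle step. In step~2 you write ``$\hat{F}_{\psi}(c)$ is the canonical code of $F_{\psi}(x)$'', and in step~4 you invoke ``the $<_{\text{ZFC-definable-order}}$-least code for $(x_{0},\dots,x_{i})$''. Neither of these objects exists in general: a code for a set $y$ is built from a bijection between $\text{tc}(\{y\})$ and an ordinal, and selecting such a bijection definably for every $y$ amounts to having a definable global well-ordering of $V$, which fails in any model of ZFC with $V\neq\text{HOD}$. So although $F_{\psi}$ is definable and the coding \emph{relation} is definable, a definable \emph{encoding} $\hat{F}_{\psi}$ of $F_{\psi}$ need not exist, and the reduction program $P$ cannot be run on it. The paper's parenthetical remark at the start of its proof is precisely this observation.

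The paper circumvents this by passing to a class-forcing extension $M[G]$ that adds a global well-ordering without adding sets; there an encoding $\hat{F}$ does exist, the reduction can be executed, and---because each individual computation halts and hence uses only set-many oracle values---one can reflect the result back into $M$ by quantifying over all set-sized partial encodings $\hat{f}$ of $F_{\psi}$. This last move is what replaces your appeal to a nonexistent canonical encoding: the definition of $F_{\phi}(x)$ in $M$ says ``for every code $c_{x}$ and every sufficiently large partial encoding $\hat{f}$, $P^{\hat{f}}(c_{x})$ outputs a code for $y$'', which is a genuine $\in$-formula. Your outline would go through verbatim under the additional hypothesis $V=\text{HOD}$, but as stated it does not establish the lemma.
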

\begin{proof}
    (Note that the statement is not immediate, since the program that implements the reduction requires an encoded effectivizer for $\psi$, rather than merely an effectivizer. In the absence of a definable global well-ordering, this may fail.)

    Suppose that $P$ ZFC-provably reduces $\phi$ to $\psi$, and let $F$ be a definable effectivizer for $\phi$, and let $M$ be a model of ZFC. Over $M$, consider the class forcing $(\mathbb{P},\preceq)$ for obtaining a global well-ordering of $M$.\footnote{That is, $\mathbb{P}$ consists of all well-orderings in $M$ and $\preceq$ is the end-extension relation. See, e.g., \cite{Gitman}.} Let $M[G]$ be the extension. Then $M[G]$ is a model of ZFC. Moreover, since this class forcing does not add any new sets (see \cite{HKS}, Example 3.2 and Lemma 3.5),\footnote{We thank Philipp Schlicht for pointing us to this reference.} the definition of $F$ yields the same function in $M[G]$ as in $M$. However, in $M[G]$, we can use the global well-ordering to encode $F$ as $\hat{F}$. Now, for any $x\in M$ and any code $c_{x}$ for $x$, $P^{\hat{F}}(c_{x})$ will by assumption terminate and output a code $c_{y}$ for some set $y$ such that $y$ depends only on $x$ (and not on $c_{x}$) and such that the function mapping $x$ to $y$ is an effectivizer for $\phi$. Since the computation $P^{\hat{F}}(c_{x})$ halts, it can only use set-many instances of $\hat{F}$. Thus, there is some $\hat{f}\in M$ such that $P^{\hat{f}}(c_{x})$ halts without requiring instances of $\hat{F}$ other than those encoded in $\hat{f}$ with output $c_{y}$. 
    
    We can thus define (in $M$) an effectivizer $F_{\phi}$ for $\phi$ as follows: $F_{\phi}(x)=y$ if and only if, for every code $c_{x}$ for $x$ and every function $\hat{f}$ which satisfies the definition of an encoded effectivizer for $\psi$ for all elements of its domain, $P^{\hat{f}}(c_{x})$ halts without requiring values of the oracle function not encoded in $\hat{f}$ and its output is a code for $y$.
    
\end{proof}

This suffices to clarify the ZFC-provable OTM-reduction relations between AC and the other non-effective axioms of ZFC:

\begin{theorem}{\label{choice and friends}}
\begin{enumerate}
    \item AC is not ZFC-provably OTM-reducible to $\Sigma_{n}$-Sep, for all $n\in\omega$. 
    \item AC is not ZFC-provably OTM-reducible to $\Sigma_{n}$-Rep, for all $n\in\omega$. 
    \item AC is not ZFC-provably OTM-reducible to Pot. 
\end{enumerate}
\end{theorem}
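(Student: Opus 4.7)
The strategy is uniform across (1)--(3): apply Lemma \ref{definability and reducibility}, verifying that each $\psi \in \{\text{Pot}, \Sigma_{n}\text{-Sep}, \Sigma_{n}\text{-Rep}\}$ admits a parameter-free definable effectivizer, while producing a model of ZFC in which no parameter-free definable effectivizer for AC exists. For Pot, the map $x \mapsto \mathfrak{P}(x)$ is plainly $\in$-definable. For $\Sigma_{n}$-Sep, I would use that the $\Sigma_{n}$-truth predicate $T_{n}$ is $\in$-definable (in fact by a $\Sigma_{n}$-formula); the assignment $(\lceil\phi\rceil,X,\vec{p}) \mapsto \{z \in X : T_{n}(\lceil\phi\rceil,(z,\vec{p}))\}$ is then $\in$-definable. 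For $\Sigma_{n}$-Rep, I would use $T_{n}$ analogously to define $F(\lceil\phi\rceil,X,\vec{p})$ as the image of $X$ under the function given by $\phi$ when $\phi$ is functional on $X$, and as $\emptyset$ otherwise.

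Next, I would show that any parameter-free definable effectivizer $F$ for AC forces $V = \text{HOD}$. Setting $c(x) := F((\{x\}))(x)$ for nonempty $x$ yields a parameter-free definable global choice function, and a standard transfinite recursion on rank --- well-ordering each $V_{\alpha}$ by iteratively picking $c$-minimal elements and stacking --- produces a parameter-free definable global well-ordering of $V$, whence $V = \text{HOD}$. Thus, by Lemma \ref{definability and reducibility}, any of the three claimed reductions would entail $\text{ZFC} \vdash V = \text{HOD}$.

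Finally, I would invoke the standard consistency of $V \neq \text{HOD}$: in $L[g]$, where $g \subseteq \omega$ is Cohen-generic over $L$, weak homogeneity of Cohen forcing gives $\text{HOD}^{L[g]} \subseteq \text{HOD}^{L} = L$, yet $g \in L[g] \setminus L$, so $L[g] \models V \neq \text{HOD}$. This contradiction establishes (1)--(3) simultaneously. The only subtle point is checking that the recursion converting $c$ into a global well-ordering introduces no new parameters; this is automatic, since the recursion depends only on $c$, which is itself parameter-free definable.
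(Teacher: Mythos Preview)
Your proposal is correct and follows essentially the same route as the paper: invoke Lemma~\ref{definability and reducibility}, note that Pot, $\Sigma_{n}$-Sep, and $\Sigma_{n}$-Rep each have definable effectivizers, conclude that a ZFC-provable reduction would yield a definable effectivizer for AC and hence a definable global choice function, and then point to a model of ZFC$+V\neq\text{HOD}$ (a Cohen extension of $L$) as a counterexample. You spell out in more detail the construction of definable effectivizers via the $\Sigma_{n}$-truth predicate and the passage from a definable AC-effectivizer to $V=\text{HOD}$, both of which the paper leaves implicit, but the argument is the same.
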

\begin{proof}
Since Pot, $\Sigma_{n}$-Sep and $\Sigma_{n}$-Rep all have definable effectivizers, AC being ZFC-provably OTM-reducible to any of these would, by Lemma \ref{definability and reducibility}, imply the existence of a definable global choice function. Thus, any transitive class model $M$ of ZFC that does not have a definable global choice function\footnote{By \cite{Hamkins:MO}, this is the case in any model of ZFC+V$\neq\text{HOD}$, such as a Cohen extension of $L$.} is a counterexample to the reduction.
\end{proof}

\begin{remark}
    We remark, however, that AC$\leq_{\text{OTM}}\phi$ is consistent with ZFC for every formula $\phi$, since, if $V=L$, then AC is in fact OTM-effective (\cite{Ca2018}, Proposition $3$) and thus trivially reducible to any formula. 
\end{remark}

\begin{proposition}{\label{reduction to AC}}
None of Pot, $\Sigma_{1}$-Sep and $\Sigma_{1}$-Rep is OTM-reducible to AC.
\end{proposition}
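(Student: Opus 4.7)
My plan is to work inside $L$, where, as noted in the remark following Theorem~\ref{choice and friends}, AC is OTM-effective (by virtue of the definable global well-ordering $<_L$), yet each of Pot, $\Sigma_1$-Sep and $\Sigma_1$-Rep fails to be OTM-effective. An OTM-reduction of any such $\phi$ to AC could then be composed with an OTM-computable encoded effectivizer for AC to produce an OTM-computable encoded effectivizer for $\phi$ itself, contradicting its non-effectivity.

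First I would spell out (or cite from \cite{Ca2018}, Proposition~$3$) that AC is OTM-effective in $L$: given a code $c_X$ for a set $X$ of non-empty sets, an OTM enumerates the constructible hierarchy via Koepke's algorithm until every $x\in X$ has been located, then assigns to each $x$ its $<_L$-least element and writes the resulting choice function to the output tape. This yields an OTM-computable encoded effectivizer $\hat{F}_{\text{AC}}$ for AC.

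Next I would verify that none of the three principles is OTM-effective in $L$. For Pot this is immediate: by Theorem~\ref{pot}(2) we have Pot$\equiv_{\text{OTM}}$NextCard in $L$, and Proposition~\ref{card and hp} shows that NextCard is never OTM-effective. For $\Sigma_1$-Sep, Lemma~\ref{truth and sep}(1) shows that an OTM-computable encoded effectivizer would permit OTM-decision of $\Sigma_1$-truth; since the assertion ``$(P,\rho)$ halts'' can be written as a $\Sigma_1$-statement (the existence of a halting computation sequence), this would solve HP, contradicting the non-effectivity of HP. For $\Sigma_1$-Rep, Lemma~\ref{sep and rep}(6) gives $\Sigma_1$-Rep$\equiv_{\text{OTM}}\Sigma_1$-Sep in $L$, so the preceding case applies.

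Finally I would assemble the contradiction: suppose, in $L$, that $\phi\leq_{\text{OTM}}$AC via some parameter-program $(P,\rho)$ where $\phi$ is any of Pot, $\Sigma_1$-Sep or $\Sigma_1$-Rep. Replacing each oracle call of $P$ by an invocation of the OTM-subroutine that computes $\hat{F}_{\text{AC}}$ turns $P^{F_{\text{AC}}}(\rho)$ into an oracle-free OTM-program computing an encoded effectivizer for $\phi$, so $\phi$ is OTM-effective in $L$, contradicting the preceding step. The one point requiring a brief comment is the legitimacy of this composition of an oracle OTM-program with an OTM-computable oracle, which is standard for OTMs but worth pointing out for the reader.
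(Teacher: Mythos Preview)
Your core idea—that in $L$ the principle AC is OTM-effective while Pot, $\Sigma_1$-Sep and $\Sigma_1$-Rep are not, so no reduction can exist there—is exactly what drives the paper's argument for $\Sigma_1$-Sep. However, by working only inside $L$ you establish the non-reducibilities only in $L$; in other words, you show that the reductions are not ZFC-provable. The paper's proof aims for the stronger conclusion that the non-reducibilities hold in \emph{every} model: it argues that if $\Sigma_1$-Sep$\leq_{\text{OTM}}$AC held in an arbitrary model $M$, the reduction would descend to $L^M$ (citing the $\Sigma_1$-definability of $L$), where your argument then applies verbatim to produce the contradiction via HP. For $\Sigma_1$-Rep the paper uses the model-independent fact $\Sigma_1$-Sep$\leq_{\text{OTM}}\Sigma_1$-Rep (Lemma~\ref{sep and rep}(1)) and transitivity, rather than the $L$-specific equivalence from Lemma~\ref{sep and rep}(6) that you invoke.

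For Pot the paper takes a genuinely different route: it appeals to the cardinality method (\cite{Ca2018}, Lemma~7), noting that any effectivizer for Pot must raise the cardinality of the transitive closure of its input, whereas an effectivizer for AC cannot. This gives Pot$\nleq_{\text{OTM}}$AC in every model directly, without going through $L$. Your approach via Theorem~\ref{pot}(2) and the non-effectivity of NextCard is correct inside $L$ but again does not extend to arbitrary models. So the missing ingredient throughout your write-up is the passage from $L$ to an arbitrary $M$: for $\Sigma_1$-Sep and $\Sigma_1$-Rep this is the relativization-to-$L^M$ step the paper sketches, while for Pot the cardinality argument avoids the issue entirely.
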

\begin{proof}
Pot$\nleq_{\text{OTM}}$AC follows from \cite{Ca2018}, Lemma $7$, combined with the fact that effectivizers for Pot increase cardinalities of the transitive closure of sets, while effectivizers for AC do not.

If we had $\Sigma_{1}\text{-Sep}\leq_{\text{OTM}}$AC in any model $M$ of ZFC, we would, in particular, have it in $L^{M}$ (since $L^{M}$ is $\Sigma_{1}$-definable in $M$); however, this would in fact make $\Sigma_{1}$-Sep OTM-effective for $L^{M}$ (since $L^{M}$ has an OTM-computable global well-ordering). But this cannot be, for HP$\leq_{\text{OTM}}\Sigma_{1}$-Sep.

A fortiori, since $\Sigma_{1}$-Sep$\leq_{\text{OTM}}\Sigma_{1}$-Rep, we also have $\Sigma_{1}$-Rep$\nleq_{\text{OTM}}$AC.
\end{proof}

\section{Conclusion and further work}



As a consequence of the results in this paper, we thus obtain the following overall picture: The set-theoretical axioms usually regarded as ``non-constructive'' or ``impredicative'' -- i.e., the power set axiom, the axiom of separation, the axiom of replacement and the axiom of choice -- are indeed all non-effective in the sense of this paper.\footnote{In \cite{CGP}, it was also noted that they are not OTM-realizable.} Concerning reducibilities, Rep allows one to make Pot and Sep effective, while Pot, Sep and AC are non-constructive in ``orthogonal'' ways. Moreover, many of the considered principles are reducible to each other in ``constructive'' universes, such as $L$ (or models of $V=\text{HOD}$), but not in general. This suggests studying more precisely the conditions under which certain reductions are possible. 

The above investigations can, of course, be extended to further set-theoretical statements. 

We may also ask about the overall structure of the ordering induced on the set of set-theoretical formulas by $\leq_{\text{OTM}}$. Is it linear? Is it dense? Which elements $e$ (if any) can be ``decomposed'' in the sense that $e\equiv_{\text{OTM}}a\oplus b$ where $a,b<_{\text{OTM}}e$?

The statement HP can be regarded as an analogue of the Turing jump in the context of OTM-effective reducibility. In general, degree-theoretical questions can be asked about set-theoretical statements. We regard the following one as particularly interesting:

\begin{question}
    Is there a ``natural'' set-theoretical statement $\phi$ such that $\top<_{\text{OTM}}\phi<_{\text{OTM}}HP$?
\end{question}

As already mentioned above, the notion of OTM-reducibility can also be applied to formulas with infinitely many quantifier alternations. In particular, one can define an effectivizer for the axiom of determinacy, AD, as a map 
$F:\mathfrak{P}(\omega^{\omega})\times \omega^{<\omega}\rightarrow\omega$ 
such that, for all $X\subseteq \omega^{\omega}$, $s\mapsto F(X,s)$ is a winning strategy for $X$. 

\begin{question}
    Which of the ZF-provable consequences of AD are OTM-reducible to AD?
\end{question}

Frequently, non-effectivity or non-reducibility results not from the inability of an OTM to compute sets satisfying a certain property, but to pick out one of them. Thus, it should be worthwhile to study a notion of effectivity and reducibility based on the ability to compute a set of witnesses, rather than a single one; this would correspond to the notion of algebraic definability in set theory, see Hamkins and Leahy, \cite{Hamkins-Leahy}.



\section{Acknowledgements}

This paper is an expanded version of our CiE paper \cite{Ca2025}. It has profited considerably from the comments and corrections of the five (!) anonymous referees who reviewed that paper. 
We also thank the organizers of the workshop ``Weihrauch Complexity: Structuring the Realm of Non-Computability'' in Dagstuhl (Dagstuhl seminar 25131) and Schloss Dagstuhl for its hospitality; several of the results in this paper that go beyond those in \cite{Ca2025} were obtained during that workshop.

\end{document}